\newcommand\dela[1]{}
\newtheorem{Theorem}{Theorem}[section]
\newtheorem{Lemma}{Lemma}[section]
\newtheorem{Corollary}{Corollary}
\newtheorem{Remark}{Remark}[section]
\newcommand{\R}{\mathbb R}
\renewcommand{\P}{\mathbb P}
\newcommand{\ds}{\displaystyle}
\newcommand{\CO}{{{ \mathcal O }}}
\newcommand\divv{\mbox{div}}
\newcommand{\RR}{{\mathbb{R}}}
\newcommand{\DEQSZ}{\begin{eqnarray}}
\newcommand{\EEQSZ}{\end{eqnarray}}
\newcommand{\lk}{\left}
\newcommand{\rk}{\right}
\newcommand{\lb}{\langle}
\newcommand{\rb}{\rangle}
\newenvironment{acknowledgements}{\noindent{\bf Acknowledgements. }} {}
\newcommand{\PP}{{\mathbb{P}}} 
\begin{document}

%\begin{frontmatter}
\title[]{Existence and convergence results for infinite dimensional nonlinear stochastic equations with multiplicative noise}

\author{Viorel Barbu}
\address{\small University Al. I. Cuza and Institute of Mathematics Octav
Mayer,\\  \small Ia\c si, Romania}
\author{Zdzis{\l}aw Brze\'{z}niak}
\address{\small Department of Mathematics, University of York, \small
Heslington, York YO10 5DD, UK}
\author{Erika Hausenblas}
\address{\small
Department of Mathematics and Informationtechnology, Montanuniversity Leoben,\\
\small Franz Josefstr. 18, 8700 Leoben, Austria}
\author{Luciano Tubaro}
\address{\small Department of Mathematics, University of Trento, Italy }
\date{June \textcolor{blue}{19th},  2012}

\begin{abstract}
The solution $X_n$ to a nonlinear stochastic differential equation
of the form
$dX_n(t)+A_n(t)X_n(t)\,dt-\tfrac12\sum_{j=1}^N(B_j^n(t))^2X_n(t)\,dt=\sum_{j=1}^N B_j^n(t)X_n(t)d\beta_j^n(t)+f_n(t)\,dt$,
$X_n(0)=x$, where $\beta_j^n$ is a regular approximation of a
Brownian motion $\beta_j$, $B_j^n(t)$ is a family of linear continuous
operators from $V$ to $H$ strongly convergent to $B_j(t)$,
$A_n(t)\to A(t)$, $\{A_n(t)\}$ is a family of maximal monotone
nonlinear operators of subgradient type from $V$ to $V'$, is
convergent to the solution to the stochastic differential equation
$dX(t)+A(t)X(t)\,dt-\frac12\sum_{j=1}^NB_j^2(t)X(t)\,dt=\sum_{j=1}^NB_j(t)X(t)\,d\beta_j(t)+f(t) \,dt$,
$X(0)=x$. Here $V\subset H\cong H'\subset V'$ where $V$ is a reflexive
Banach space with dual $V'$ and $H$ is a Hilbert space. These
results can be reformulated in terms of Stratonovich stochastic
equation $dY(t)+A(t)Y(t)\,dt=\sum_{j=1}^NB_j(t)Y(t)\circ d\beta_j(t)+f(t)\,dt$.

\end{abstract}
\maketitle

%\noindent {\bf 2000 Mathematics Subject Classification AMS}:  

\subjclass[2000]{60J60, 47D07, 15A36, 31C25}

%\noindent {\bf Key words}: 

\keywords{Stochastic differential equations, Brownian motion, progressively measurable,
porous media equations.}

%\end{frontmatter}

\section{Introduction}

Consider the stochastic differential equation
\begin{equation}\label{e1.1}\lk\{
\begin{array}{l}
\begin{aligned}dX(t)+A(t)X(t)\,dt -\tfrac12 \sum_{j=1}^N B_j^2(t)X(t)\,dt=\hspace{4cm}\\
\sum_{j=1}^N B_j(t)X(t)\,d\beta_j(t)+f(t)\,dt,\quad t\in[0,T]\end{aligned}\\
X(0)=x
\end{array}\rk.
\end{equation}
where $A(t)\colon V\to V'$ is a nonlinear monotone operator,
$B_j(t)\in L(V,H)$, $\forall t\in[0,T]$ and $\beta_j$ are independent Brownian
motion in a probability space $\{\Omega,\mathcal F,\{\mathcal
F_t\}_{t\ge 0},\P\}$.

Equation \eqref{e1.1} is of course equivalent with the Stratonovich stochastic differential equation

$$
\leqno{(1.1)'} \qquad dY(t)+A(t)Y(t)\,dt=\sum_{j=1}^N B_j(t)Y(t)\circ d\beta_j(t)+f(t)\,dt,\quad t\in[0,T].
$$
Here $V$ is a reflexive Banach space with dual $V'$ such that $V\subset H\subset V'$ algebraically and topologically, where
$H$ (the {\it pivot} space) is a real Hilbert space. (The assumptions on $A(t)$, $B_j(t)$ will made precise later on.)

We associate with \eqref{e1.1} the random differential equation
\begin{equation}\label{e1.2}
\lk\{\begin{array}{l}
\ds\frac{dy}{dt}+\Lambda(t)y=g(t),\qquad t\in[0,T],\;\P\mbox{-a.s.}\\ \\
y(0)=x,
\end{array}\rk.
\end{equation}
where $g(t)=e^{\sum_{j=1}^N\beta_j(t)B_j^\ast(t)}f(t)$, $B_j^\ast(t)$ is the adjoint of $B_j(t)$ and
$\Lambda(t)$ is the family of operators
\begin{multline}\label{e1.3}
\Lambda(t)y=e^{-\sum_{j=1}^N\beta_j(t)\,B_j(t)}A(t)e^{\sum_{j=1}^N\beta_j(t)\,B_j(t)}y \;+\\
+\sum_{j=1}^N\int_0^{\beta_j(t)}e^{-s\,B_j(t)}\dot B_j(t)e^{s\,B_j(t)}y\,ds,
\qquad \forall y\in V
\end{multline}
where $\dot B_j$ is the derivative of $t\to B_j(t)\in L(V,H)$ and
$(e^{sB_j(t)})_{s\in\RR}$ is the $C_0$-group generated by $B_j(t)$ on
$H$ and $V$.

It is well known (see e.g., \cite[pag. 202]{DZ}, \cite{brz-cap-flan}, \cite{flan-lisei}
) that assuming that $B_jB_k(t)=B_k(t)B_j(t)$ for all $j$, $k$, at least formally equations \eqref{e1.1}
and \eqref{e1.2} are equivalent via the transformation
\begin{equation}\label{e1.4}
X(t)=e^{\sum_{j=1}^N\beta_j(t)\,B_j(t)}y(t),\qquad \P\mbox{-a.s}, \, t\in [0,T],
\end{equation}
and this is indeed the case if \eqref{e1.2} has a strong, progressively measurable solution
$y\colon [0,T]\times\Omega\to H$.

We consider also the family of approximating stochastic equations
\begin{equation}\label{e1.5}\lk\{
\begin{array}{l}
\ds\frac{d\ }{dt}X_n+A_n(t)X_n= \sum_{j=1}^NB_j^n(t)X_n(t)\dot\beta_j^n(t)+f_n(t),\qquad \P\mbox{-a.s.}\\ \\
X_n(0)=x.
\end{array}\rk.
\end{equation}
where $\{\beta_j^n\}$ is a sequence of {smooth} stochastic processes
convergent to $\beta_j$, that is $\beta_j^n(t)\to \beta_j(t)$ uniformly on $[0,T]$, $\P$-a.s. and $A_n\to A$,
 $B_j^n\to B_j$, $f_n\to f$ as $n\to \infty$ in a sense to be made precise below.

Equation \eqref{e1.5} is just an approximation of Stratonovich equation \eqref{e1.1}$'$
where $\beta_j^n$ is a regularization of $\beta_j$. One must emphasize that $\{\beta_n\}$
might be adapted to a filtration different of $\{\mathcal F_t\}$.\\
Equation \eqref{e1.5} reduces via \eqref{e1.4}, that is
$$
\leqno{(1.4)_n} \qquad X_n(t)=e^{\sum_{j=1}^N\beta_j^n(t)\,B_j^n(t)}y_n(t)
$$
to a random differential equation of the form \eqref{e1.2} that is
$$\leqno{(1.2)_n}\lk\{
\begin{array}{l}
\ds\frac{dy_n}{dt}+\Lambda_n(t)y_n=g_n(t),\qquad t\in[0,T],\;\P\mbox{-a.s.}\\ \\
y_n(0)=x
\end{array}\rk.
$$
where $g_n(t)=e^{\sum_{j=1}^N\beta_j^n(t){B_j^n}^\ast(t)}f_n(t)$ and $\Lambda_n$ are given by
$$\leqno{(1.3)_n}\quad
\begin{aligned}\Lambda_n(t)=e^{-\sum_{j=1}^N\beta_j^n(t)\,B_j^n(t)}A_n(t)e^{\sum_{j=1}^N\beta_j^n(t)\,B_j^n(t)}+\hspace*{2cm}\\
+\sum_{j=1}^N\ds\int_0^{\beta_j^n(t)}e^{-s\,B_j^n(t)}\dot B_j^n(t)e^{s\,B_j^n(t)}y\,ds.\end{aligned}
$$
The main result (see Theorems \ref{t2.1}, \ref{t2.2}, \ref{t2.3}
below) is that under suitable assumptions equations \eqref{e1.2},
(1.2)${}_n$ have unique solutions $y$ and $y_n$ which are
progressively measurable processes and for $n\to\infty$, we have
$y_n\to y$, $X_n\to X$  in a certain precise sense. In the linear case such a result was established
by a different method  in \cite{brz-cap-flan}, (we refer also to \cite{DaPT}, \cite{flan-lisei}
to other results in this direction.) The variational method we use here allows to treat a general class of nonlinear equations \eqref{e1.1}
possibly multi-valued (On these lines see also \cite{B,B2}.)

The applications given in Sect.\ 4.1 refer to stochastic porous
media equations and nonlinear stochastic diffusion equations of
divergence type but of course the potential area of applications
is much larger.
\paragraph{Notation}  If $Y$ is a Banach space we denote by $L^p(0,T;Y)$, $1\le p\le \infty$ the
space of all (equivalence classes of) $Y$-measurable functions $u\colon (0,T)\to Y$ with $\|u\|_Y\in L^p(0,T)$
(here $\|\cdot\|_Y$ is the norm  of $Y$). Denote by $C([0,T];Y)$ the space of all continuous $Y$-valued
functions on $[0,T]$ and by $W^{1,p}([0,T];Y)$ the infinite dimensional Sobolev space $\{y\in L^p(0,T;Y), \frac{dy}{dt}\in L^p(0,T;Y)\}$
where $\frac{d\ }{dt}$ is considered in sense of vectorial distributions. It is well known that $W^{1,p}([0,T];Y)$ coincides
with the space of absolutely continuous functions $y\colon [0,T]\to Y$, a.e. differentiable and with derivative
$y'(t)=\frac{dy}{dt}(t)$ a.e. $t\in (0,T)$ and $\frac{dy}{dt}\in L^p(0,T;Y)$(see e.g., \cite{barbu-2010}.)
If $p\in[1,\infty]$ is given we denote by $p'$ the conjugate
exponent, i.e., $\frac1p+\frac1{p'}=1$.

\def\temp1{Given a lower-semicontinuous and convex function $\varphi\colon Y\to \bar \R=]-\infty,+\infty]$ denote by
$\partial \varphi\colon Y\to Y'$ (the dual space) the \textbf{subdifferential} of $\varphi$, i.e.,
\begin{equation}\label{e1.6}
\partial\varphi(y)=\{z\in Y' : \varphi(y)-\varphi(u)\le \langle y-u,z \rangle, \forall y\in Y\}.
\end{equation}
The function $\varphi^\ast\colon Y'\to \bar \R$ defined by
\begin{equation}\label{e1.7}
\varphi^\ast(z)=\sup\{ \langle y,z \rangle-\varphi(y) : y\in Y\}
\end{equation}
is called the \textbf{conjugate} of $\varphi$ and likewise $\varphi$ it is convex and lower-semicontinuous on $Y'$.
Also we notice the following key conjugacy formulae (see e.g., \cite[pag. 6]{B0})
\begin{equation}\label{e1.8}
\varphi(y)+\varphi^\ast(z)=\langle y,z\rangle\quad \mbox{iff } z\in\partial\varphi(y)
\end{equation}
\begin{equation}\label{e1.9}
\varphi(y)+\varphi^\ast(\bar z)\ge\langle y,\bar z\rangle\quad \forall y\in Y,\bar z\in Y'
\end{equation}
(Here $\langle\cdot,\cdot\rangle$ is the duality pairing between $Y$ and $Y'$.)}

%%%%%%%%%%%
\section{The main results}
We shall study here equation \eqref{e1.2} under the following assumptions\\[6pt]
\textbf{(i)} \emph{ $V$ is a separable real reflexive Banach space with the dual $V'$ and $H$ is a separable real Hilbert
space such that $V\subset H\subset V'$ algebraically and topologically.
}\\[6pt]
We denote by $|\cdot|$, $\|\cdot\|_V$ and $\|\cdot\|_{V'}$ the norms in $H$, $V$ and $V'$, respectively and
by $\langle\cdot,\cdot\rangle$ the duality pairing on $V\times V'$ which coincides with the scalar
product  $(\cdot,\cdot)$ of $H$ on $H\times H$.\\[6pt]
\textbf{(ii)} \emph{$A(t)y=\partial\psi(t,y)$, a.e. $t\in(0,T)$, $\forall y\in V$, $\P$-a.s.,
where $\psi\colon (0,T)\times V\times\Omega\to\R$ is convex and lower-semicontinuous in $y$ on $V$
and measurable in $t$ on $[0,T]$. There are $\alpha_i>0$, $\gamma_i\in\R$, $i=1,2$, $1<p_1\le p_2<\infty$
\begin{equation}\label{e1.10}
\gamma_1+\alpha_1\,\|y\|_V^{p_1}\le \psi(t,y)\le \gamma_2+\alpha_2\,\|y\|_V^{p_2},\quad \forall y\in V.
\end{equation}}\\[6pt]
\textbf{(iii)} \emph{There are $C_1$, $C_2\in \R^+$ such that
\begin{equation}\label{e1.11}
 \psi(t,-y)\le C_1\,\psi(t,y)+C_2,\qquad \forall y\in V, t\in
 (0,T).
\end{equation}
}\\
(The constants $C_i$, $\gamma_i$, $\alpha_i$ are dependent on $\omega$.)\\[6pt]
\textbf{(iv)} \emph{For each $y\in V$ the stochastic process $\psi(t,y)$ is progressively measurable with respect to
filtration $\{\mathcal F_t\}_{t\ge0}$.}\\[6pt]
\textbf{(v)} \emph{$B_j(t)$ is a family of linear, closed and
densely defined operators in $H$ such that $B_j(t)=-B_j^*(t)$,
$\forall t\in [0,T]$, $B_j(t)$ generates a $C_0$-group
$(e^{s\,B_j(t)})_{s\in\RR}$ on $H$ and $V$. Moreover,
$B_j\in C^1([0,T];L(V,H))$, $B_j(t)B_k(t)=B_k(t)B_j(t)$ for all $j$, $k$.}\\[6pt]
\textbf{(vi)} \emph{$f\colon [0,T]\times\Omega\to V'$ is progressively measurable and $f\in L^{p_1'}(0,T;V')$,
$\P$-a.s.
}\\[6pt]
We note that by (ii) $A(t,\omega)\colon V\mapsto V'$ is, for all $t\in[0,T]$ and $\omega\in\Omega$,
maximal monotone and surjective (see \cite{barbu-2010}) but in general multi-valued if $\psi$ is not G\^ateaux differentiable in $y$.
\begin{Theorem}\label{t2.1}
Let $y_0\in H$. Then under assumptions {\bf(i)} $\sim$ {\bf(vi)} there is for each $\omega\in\Omega$
a unique function $y=y(t,\omega)$ to equation \eqref{e1.2} which satisfies
\begin{equation}\label{e1.12}
y\in L^{p_1}(0,T;V)\cap C([0,T];H)\cap W^{1,p_2'}([0,T];V'),
\end{equation}
\begin{equation}\label{e1.13}
\lk\{
\begin{array}{l}
\ds\frac{dy}{dt}(t)+\Lambda(t)y(t)\ni g(t), \quad \mbox{a.e. } t\in (0,T),\\\ \\
y(0)=y_0.
\end{array}
\rk.
\end{equation} Moreover, the process $y\colon
[0,T]\times\Omega\to H$ is progressively measurable with respect
to the filtration $\{\mathcal F_t\}_{t\ge0}$.
\end{Theorem}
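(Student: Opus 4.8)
The plan is to prove Theorem~\ref{t2.1} by a deterministic (pathwise) argument, fixing $\omega\in\Omega$ throughout and treating \eqref{e1.2}--\eqref{e1.3} as a single-valued-in-time, multi-valued-in-$y$ evolution equation governed by a time-dependent subgradient. The first step is to identify the structure of $\Lambda(t)$: since $B_j(t)=-B_j^*(t)$, the group $e^{sB_j(t)}$ is unitary on $H$, and the conjugation $y\mapsto e^{-\sum_j\beta_j(t)B_j(t)}A(t)e^{\sum_j\beta_j(t)B_j(t)}y$ is exactly the subdifferential $\partial\widetilde\psi(t,y)$ of the convex, lower-semicontinuous function $\widetilde\psi(t,y)=\psi\bigl(t,e^{\sum_j\beta_j(t)B_j(t)}y\bigr)$; this uses the chain rule for subdifferentials under a bounded invertible linear change of variables. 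The second summand in \eqref{e1.3} is a bounded linear operator on $V$ (indeed it maps $V$ to $H$ with norm controlled by $\|\dot B_j(t)\|_{L(V,H)}$ and $|\beta_j(t)|$), so $\Lambda(t)=\partial\widetilde\psi(t,\cdot)+C(t)$ with $C(t)\in L(V,H)$ depending continuously, hence boundedly, on $t$ on the fixed path. One then checks that $\widetilde\psi$ inherits from \eqref{e1.10} and \eqref{e1.11} the coercivity $\widetilde\gamma_1+\widetilde\alpha_1\|y\|_V^{p_1}\le\widetilde\psi(t,y)\le\widetilde\gamma_2+\widetilde\alpha_2\|y\|_V^{p_2}$ and the symmetry-type bound $\widetilde\psi(t,-y)\le C_1\widetilde\psi(t,y)+C_2'$, the point being that $e^{sB_j(t)}$ is an isomorphism of $V$ with norm bounded uniformly in $s$ ranging over the compact set $[0,\beta_j(t)]$ and in $t\in[0,T]$.

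With $\Lambda(t)$ so described, the core of the proof is an existence-and-uniqueness theorem for the Cauchy problem $y'+\partial\widetilde\psi(t,y)+C(t)y\ni g(t)$, $y(0)=y_0$, in the variational ($V\subset H\subset V'$) setting. The bounded perturbation $C(t)y$ is absorbed by a standard fixed-point/Gronwall argument (or by noting $C(t)$ is Lipschitz from $V$ to $V'$ with controllable constant), so the heart of the matter is the problem $y'+\partial\widetilde\psi(t,y)\ni \tilde g(t)$. For this I would invoke the Brezis--Lions theory of evolution equations governed by time-dependent subgradients (see \cite{barbu-2010}), which under the growth condition \eqref{e1.10} — giving coercivity and the demicontinuity/boundedness of $\partial\widetilde\psi$ from $V$ to $V'$ — yields a unique solution with $y\in L^{p_1}(0,T;V)$, $y'\in L^{p_2'}(0,T;V')$, $y\in C([0,T];H)$; uniqueness follows from monotonicity of $\partial\widetilde\psi(t,\cdot)$ and a Gronwall estimate on $\tfrac12\frac{d}{dt}|y_1-y_2|^2_H\le \langle C(t)(y_2-y_1),y_1-y_2\rangle$. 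The regularity \eqref{e1.12} then matches the claimed space. Transferring back through the (deterministic, at each fixed $t$) substitution $y\leftrightarrow$ the original variable is immediate since $e^{sB_j(t)}$ preserves each of $V$, $H$, $V'$.

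The remaining, and I expect the most delicate, point is the \emph{progressive measurability} of $\omega\mapsto y(\cdot,\omega)$. The plan here is to exhibit $y$ as a pointwise (in $\omega$) limit of approximants that are manifestly $\mathcal F_t$-progressively measurable. Concretely I would use the resolvent/Yosida regularization $\partial\widetilde\psi_\lambda$ (or the implicit Euler time-discretization) to build approximate solutions $y_\lambda$; at each step these are obtained from $y_0$, $g$, $\beta_j$ and $\psi$ by operations (solving monotone elliptic equations, composition with $e^{\pm\beta_j(t)B_j(t)}$) that preserve progressive measurability, using assumption (iv) on $\psi$, the adaptedness of the $\beta_j$, and the continuity in $t$ built into (v)--(vi); one must check the resolvents $(I+\lambda\partial\widetilde\psi(t,\cdot))^{-1}$ depend measurably on $(t,\omega)$, which follows from lower-semicontinuity of $\widetilde\psi$ plus a measurable-selection argument. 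Passing to the limit $\lambda\to0$ along the a priori bounds from \eqref{e1.10}, the limit $y$ is progressively measurable as a limit of such processes, and by uniqueness it is \emph{the} solution. The main obstacle is precisely making this measurability bookkeeping rigorous in the multi-valued case — in particular verifying that the chain rule identity $\Lambda(t)=\partial\widetilde\psi(t,\cdot)+C(t)$ and the resolvent construction are compatible with $\{\mathcal F_t\}$-progressive measurability — whereas the existence, uniqueness and regularity are a routine, if technical, application of the convex-analytic variational machinery.
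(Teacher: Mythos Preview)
Your structural decomposition $\Lambda(t)=\partial\widetilde\psi(t,\cdot)+C(t)$ with $\widetilde\psi(t,y)=\psi(t,e^{\sum_j\beta_j(t)B_j(t)}y)$ is exactly what the paper does. The gap is in the next step: you invoke ``the Brezis--Lions theory of evolution equations governed by time-dependent subgradients'' from \cite{barbu-2010}, but the paper explicitly notes (first paragraph of its proof) that the standard existence theory of \cite[p.~177]{barbu-2010} \emph{does not apply} here, precisely because of the two-exponent growth \eqref{e1.10} with $p_1\le p_2$ and the multivaluedness of $A(t)$. The concrete obstruction is this: the a~priori estimate places $y\in L^{p_1}(0,T;V)$ and $y'\in L^{p_2'}(0,T;V')$, and since $p_1\ne p_2$ these are \emph{not} conjugate, so the energy identity $\int_0^T\langle y',y\rangle\,dt=\tfrac12(|y(T)|^2-|y_0|^2)$ is not a~priori available; equivalently, from $y\in L^{p_1}(0,T;V)$ alone one cannot bound $\|\partial\widetilde\psi(t,y)\|_{V'}^{p_2'}\lesssim 1+\|y\|_V^{p_2}$ in $L^1$. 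Neither the Hilbert-space Brezis theory (which would need $g\in L^2(0,T;H)$) nor the Lions variational theory (single-valued, demicontinuous, single exponent $p$) covers this setting without additional work. Your sketch never uses hypothesis~(iii), which is a red flag: in the paper it is exactly \eqref{e1.11} that rescues the integration by parts (see how it is used to pass from $\varphi(t,-p^\ast)\in L^1$ to $\varphi(t,p^\ast)\in L^1$ and thereby get $\langle (p^\ast)',p^\ast\rangle\in L^1(0,T)$).

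What the paper actually does is bypass the direct approach entirely via the Brezis--Ekeland variational principle: it reformulates \eqref{e1.13} as the convex minimization problem \eqref{e3.5}, proves existence of a minimizer $(y^\ast,u^\ast)$ by the direct method (Lemma~\ref{L3.1}), and then uses Rockafellar-type duality between \eqref{e3.5} and its dual problem to show the infimum is zero, hence $u^\ast\in\partial\varphi(t,y^\ast)$ a.e. The delicate step---justifying the energy identity for the \emph{dual} minimizer $p^\ast$---is precisely where assumption (iii) enters. For progressive measurability the paper does not use Yosida regularization but instead an iterative gradient algorithm for the Euler--Lagrange system of \eqref{e3.5}, which manifestly preserves adaptedness at each step; your Yosida/Euler proposal is a different route that could plausibly be made to work, but you would still first need existence, and the measurable-selection issues you flag are real in the multivalued case.
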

If $A_n(t,\cdot)$ and $A_n^{-1}(t,\cdot)$ are multi-valued we mean by $A_n(t,y)$ and $A_n^{-1}(t,z)$
single valued sections.\\
By $\stackrel{\scriptstyle G}{\to}$ we denote the variational
or $\Gamma$-convergence. This means that for each $y\in V$ and $\xi\in A(t)y$ there are $y_n$
and $\xi_n\in A(t)y$ such that $y_n\to y$ strongly in $V$, $\xi_n\to \xi$ strongly in $V'$
and similarly for $A_n^{-1}(t)\to A^{-1}(t)$. Assumption \eqref{e1.14} implies
and is equivalent to: $\psi_n(t,z)\to \psi(t,z)$, $\psi_n^*(t,\tilde z)\to \psi^*(t,\tilde z)$
for all $z\in V$, $\tilde z\in V'$ and $t\in [0,T]$ where $\psi^*$ is the conjugate of $\psi$ (See e.g., \cite{ABM}.)
\vspace{12pt}

\begin{Theorem}\label{t2.2}
Assume that for each $n$, $\Lambda_n$, $B_j^n$ and $f_n$ satisfy
{\bf (i)} $\sim$ {\bf (iv)}. Then for any $y_0\in V$ there is a
unique function $y_n=y_n(t,\omega)$ which satisfies \eqref{e1.12}
and equation \eqref{e1.13} with $\Lambda_n$ instead of $\Lambda$.
Moreover, assume that for $n\to\infty$
\begin{equation}\label{e1.14}
\begin{array}{l}
A_n(t)\;\stackrel{\scriptstyle G}{\to}\; A(t),\qquad \;\;\; t\in [0,T]
\\
A_n^{-1}(t)\stackrel{\scriptstyle G}{\to} A^{-1}(t),\qquad t\in [0,T]
\end{array}\end{equation}
\begin{equation}\label{e1.15}
f_n(\cdot,\omega) \to f(\cdot,\omega),\quad \mbox{strongly in }L^{p_2'}(0,T;V'), \P\mbox{-a.s. in }\Omega.
\end{equation}
\begin{equation}\label{e1.15a}
B_j^nx  \to B_jx,\quad \mbox{in }C^1([0,T];H), \qquad \forall x\in H.
\end{equation}
Then for $n\to \infty$
\begin{equation}
y_n(\cdot,\omega)\to y(\cdot,\omega)
\end{equation}
$ \P\mbox{-a.s. weakly in }L^{p_1}(0,T;V),\mbox{ weakly-star in } L^\infty(0,T;H).$
\end{Theorem}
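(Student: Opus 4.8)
\emph{Plan.} The first assertion I would read off directly from Theorem~\ref{t2.1}; the convergence I would obtain, pathwise in $\omega$, by a compactness argument in which the nonlinear term is identified through the conjugacy formulae \eqref{e1.8}--\eqref{e1.9}. For Step~1, writing $A_n(t)=\partial\psi_n(t,\cdot)$ and $T_n(t):=e^{\sum_j\beta_j^n(t)B_j^n(t)}$, the operator in $(1.3)_n$ is $\Lambda_n(t)=\partial\widetilde\psi_n(t,\cdot)+L_n(t)$, where $\widetilde\psi_n(t,y):=\psi_n(t,T_n(t)y)$ is again convex and l.s.c.\ and satisfies \eqref{e1.10} with $\omega$-dependent constants (since $T_n(t)$ is an isomorphism of $V$), and $L_n(t):=\sum_j\int_0^{\beta_j^n(t)}e^{-sB_j^n(t)}\dot B_j^n(t)e^{sB_j^n(t)}\,ds\in L(V,H)$. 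Thus $(1.2)_n$ is precisely of the form \eqref{e1.2}--\eqref{e1.3}, so Theorem~\ref{t2.1} applies and gives, for every $\omega$, a unique $y_n$ satisfying \eqref{e1.12} and $(1.2)_n$ and progressively measurable; this is the first assertion.

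\emph{Step 2: uniform estimates.} The structural fact I would exploit is that $L_n(t)$ is skew-symmetric on $H$: since $B_j^n(t)^*=-B_j^n(t)$ the group $e^{sB_j^n(t)}$ is unitary on $H$ and $\dot B_j^n(t)^*=-\dot B_j^n(t)$, so each integrand, hence $L_n(t)$, satisfies $L_n(t)^*=-L_n(t)$, in particular $\langle L_n(t)v,v\rangle=0$ for all $v\in V$. Testing $(1.2)_n$, written $\tfrac{dy_n}{dt}+\xi_n+L_n y_n=g_n$ with $\xi_n(s)\in\partial\widetilde\psi_n(s,y_n(s))$, by $y_n$, using the chain rule for $t\mapsto\tfrac12|y_n(t)|^2$, the identity $\langle L_n y_n,y_n\rangle=0$, the lower bound $\langle\xi_n,y_n\rangle\ge\widetilde\psi_n(s,y_n)-\widetilde\psi_n(s,0)$ from \eqref{e1.10}, and controlling $g_n=e^{-\sum_j\beta_j^n(\cdot)B_j^n(\cdot)}f_n$ via \eqref{e1.15}, \eqref{e1.15a}, I expect --- exactly as in the proof of Theorem~\ref{t2.1} and uniformly in $n$ --- bounds for $y_n$ in $C([0,T];H)\cap L^{p_1}(0,T;V)$, for $\xi_n$ in $L^{p_2'}(0,T;V')$ (from the coercivity $\widetilde\psi_n^*(t,z)\ge c\|z\|_{V'}^{p_2'}-C$ dual to \eqref{e1.10} together with the energy identity), and hence for $\tfrac{dy_n}{dt}$ in $L^{p_2'}(0,T;V')$. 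Along a subsequence then $y_n\rightharpoonup\tilde y$ weakly in $L^{p_1}(0,T;V)$ and weakly$^*$ in $L^\infty(0,T;H)$, $\tfrac{dy_n}{dt}\rightharpoonup\tfrac{d\tilde y}{dt}$ and $\xi_n\rightharpoonup\xi$ weakly in $L^{p_2'}(0,T;V')$; since $W^{1,p_2'}(0,T;V')\subset C([0,T];V')$ and $\{y_n\}$ is bounded in $H$, also $y_n(t)\rightharpoonup\tilde y(t)$ weakly in $H$ for each $t$, in particular $\tilde y(0)=y_0$.

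\emph{Step 3: passing to the limit.} By \eqref{e1.15a} and the Trotter--Kato theorem, $e^{sB_j^n(t)}\to e^{sB_j(t)}$ strongly on $H$ and on $V$, uniformly for bounded $s,t$; hence $T_n(t)\to T(t):=e^{\sum_j\beta_j(t)B_j(t)}$ and $L_n(t)\to L(t)$ strongly, $L(t)^*=-L(t)$, and $g_n\to g$ strongly in $L^{p_1'}(0,T;V')$. Using $L_n(t)^*=-L_n(t)$, $y_n\rightharpoonup\tilde y$, and $L_n(\cdot)\phi(\cdot)\to L(\cdot)\phi(\cdot)$ in $L^1(0,T;H)$ for $\phi\in L^{p_1'}(0,T;V)$ (dominated convergence), I can pass to the limit in $L_n y_n$ and get $L_n y_n\rightharpoonup L\tilde y$, so $\tfrac{d\tilde y}{dt}+\xi+L\tilde y=g$ in $L^{p_2'}(0,T;V')$. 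It remains to show $\xi(t)\in\partial\widetilde\psi(t,\tilde y(t))$ a.e., where $\widetilde\psi(t,y):=\psi(t,T(t)y)$. First, \eqref{e1.14} --- equivalently, $\psi_n(t,\cdot)\to\psi(t,\cdot)$ and $\psi_n^*(t,\cdot)\to\psi^*(t,\cdot)$ pointwise --- transfers, via $T_n\to T$ and convexity (pointwise convergence of everywhere-finite convex functions, together with the uniform bounds \eqref{e1.10}, is uniform on bounded sets), to $\widetilde\psi_n(t,\cdot)\to\widetilde\psi(t,\cdot)$ and $\widetilde\psi_n^*(t,\cdot)\to\widetilde\psi^*(t,\cdot)$ pointwise and uniformly on bounded sets. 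By \eqref{e1.8}, $\widetilde\psi_n(t,y_n)+\widetilde\psi_n^*(t,\xi_n)=\langle y_n,\xi_n\rangle$ a.e.; integrating and using the equation together with $\langle L_n y_n,y_n\rangle=0$,
\[
\int_0^T\big[\widetilde\psi_n(t,y_n)+\widetilde\psi_n^*(t,\xi_n)\big]\,dt=\int_0^T\langle y_n,g_n\rangle\,dt-\tfrac12|y_n(T)|^2+\tfrac12|y_0|^2 .
\]
Taking $\limsup_n$ (weak$\times$strong convergence of $\langle y_n,g_n\rangle$ and weak lower semicontinuity of $v\mapsto|v(T)|^2$ on $H$) bounds the right-hand side by $\int_0^T\langle\tilde y,g\rangle\,dt-\tfrac12|\tilde y(T)|^2+\tfrac12|y_0|^2=\int_0^T\langle\tilde y,\xi\rangle\,dt$ (limit equation and $\langle L\tilde y,\tilde y\rangle=0$). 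On the other hand, by lower semicontinuity of the integral functionals along $y_n\rightharpoonup\tilde y$ and $\xi_n\rightharpoonup\xi$ (which holds thanks to the Mosco-type convergence $\widetilde\psi_n\to\widetilde\psi$, $\widetilde\psi_n^*\to\widetilde\psi^*$; cf.\ \cite{ABM}), the left-hand side has $\liminf_n\ge\int_0^T[\widetilde\psi(t,\tilde y)+\widetilde\psi^*(t,\xi)]\,dt$. Combined with \eqref{e1.9} this forces $\widetilde\psi(t,\tilde y(t))+\widetilde\psi^*(t,\xi(t))=\langle\tilde y(t),\xi(t)\rangle$ a.e., i.e.\ $\xi(t)\in\partial\widetilde\psi(t,\tilde y(t))$ a.e.\ by \eqref{e1.8}. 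Hence $\tilde y$ solves \eqref{e1.13} with the regularity \eqref{e1.12}, so $\tilde y=y$ by the uniqueness in Theorem~\ref{t2.1}; since the limit is independent of the subsequence, $y_n\rightharpoonup y$ in the asserted topologies.

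\emph{Main obstacle.} I expect Step~3 to be the crux: identifying $\xi$ requires simultaneously (a) eliminating the first-order term $L_n$ from the energy balance, which is exactly where the skew-symmetry $L_n(t)^*=-L_n(t)$, a consequence of $B_j=-B_j^*$, is essential; (b) controlling $\limsup_n|y_n(T)|^2$ from below through the weak $H$-convergence of the traces $y_n(T)$; and (c) establishing the matching lower semicontinuity of \emph{both} integral functionals $\int\widetilde\psi_n(t,y_n)$ and $\int\widetilde\psi_n^*(t,\xi_n)$ along the weakly convergent sequences, which is the place where hypothesis \eqref{e1.14} is genuinely used. The bookkeeping with the two exponents $p_1\le p_2$ (integrability of $t\mapsto\langle\tfrac{dy_n}{dt}(t),y_n(t)\rangle$ and of $t\mapsto\widetilde\psi_n(t,y_n(t))$) and the uniform-in-$n$ control of the group norms $\|e^{sB_j^n(t)}\|_{L(V)}$ are routine but require the same care as in Theorem~\ref{t2.1}.
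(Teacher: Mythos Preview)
Your proposal is correct and close in spirit to the paper's proof, but the limit passage in Step~3 is organized differently and it is worth noting the contrast.

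The paper, having established in Theorem~\ref{t2.1} that the solution coincides with the minimizer of the Brezis--Ekeland functional \eqref{e3.5}, exploits this \emph{minimality} of $(y_n,u_n)$ for the $n$-th functional: it compares against the fixed test pair $(y^\ast,u^\ast)$ (the solution of the limit problem) to obtain the upper bound \eqref{e3.20}, and then converts the functional at $y_n$ into the functional at the weak limit $\tilde y$ via the subgradient inequalities $\psi_n(t,\tilde z)\le\psi_n(t,z_n)+\langle A_n(t)\tilde z,\tilde z-z_n\rangle$ (and the analogous one for $\psi_n^\ast$), where the correction terms vanish because $A_n(t)\tilde z\to A(t)\tilde z$ strongly by \eqref{e1.14} while $z_n\rightharpoonup\tilde z$. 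Your route avoids the comparison with $(y^\ast,u^\ast)$ altogether: you use instead that the Brezis--Ekeland functional at $(y_n,\xi_n)$ is \emph{exactly} zero (the energy identity you display), and then pass to the limit in the two convex integral functionals by Mosco lower semicontinuity, which is available since \eqref{e1.14} is equivalent to $\psi_n\to\psi$, $\psi_n^\ast\to\psi^\ast$ pointwise. Your argument is the standard Minty/Browder identification adapted to varying functionals; the paper's argument stays entirely inside the variational framework of \eqref{e3.5}. Both require the skew-symmetry $\langle L_n y_n,y_n\rangle=0$ and the weak lower semicontinuity of $|y_n(T)|^2$, which you flag correctly. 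One small point: in Step~2 you should make explicit that the constants $\alpha_i,\gamma_i,C_i$ in {\bf(ii)}--{\bf(iii)} are assumed uniform in $n$ (the paper tacitly does), since otherwise the a~priori bounds need not be uniform.
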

Assumption \eqref{e1.14} implies
and is equivalent to: $\psi_n(t,z)\to \psi(t,z)$, $\psi_n^*(t,\tilde z)\to \psi^*(t,\tilde z)$
for all $z\in V$, $\tilde z\in V'$ and $t\in [0,T]$ where $\psi^*$ is the conjugate of $\psi$ (See e.g., \cite{ABM}.)\\
%s%%%%%%%%%%%%%%%%%%
Coming back to equation \eqref{e1.1} we say that the process
$X:[0,T]\to H$ is a solution to \eqref{e1.1}, if it is
progressively measurable with respect to the filtration
$\{\mathcal F_t\}_{t\ge 0}$ induced by the Brownian motion, \DEQSZ
\label{eq2.8} X\in C([0,T],H) \cap L ^{p_1}(0,T;V),\quad \mbox{
$\PP$--a.s.} \EEQSZ and \DEQSZ \label{eq2.9}\hspace{25pt}X(t) &=
&x-\int_0 ^t A(s) X(s) \, ds +\frac 12 \sum_{j=1}^N\int_0 ^t B_j^2 (s) X(s)\,
ds \\ &&{}+ \sum_{j=1}^N\int_0 ^t B_j(s) X(s) d\beta_j(s) + \int_0 ^t f(s) \,
ds,\quad \forall t\in[0,T],\, \mbox{ $\PP$--a.s.}. \nonumber\EEQSZ
%%%%%%%%%%%%%%%%%%
By Theorem \ref{t2.1} and Theorem \ref{t2.2} we find that
\begin{Theorem}\label{t2.3}
Under the assumptions of Theorem \ref{t2.1} there exist unique
solutions $X$ and $X_n$ to \eqref{e1.1} and \eqref{e1.5} respectively given by
\begin{equation}
X(t)= e^{\sum_{j=1}^N\beta_j(t)B_j(t)}y(t),\qquad
X_n(t)=e^{\sum_{j=1}^N\beta_j^n(t)B_j^n(t)}y_n(t),
\end{equation}
where $y$ and $y_n$ are solutions to \eqref{e1.2} and
\eqref{e1.2}$_n$. Moreover, we have
\begin{equation}
X, X_n \in L ^{p_1}(0,T;V),\qquad  \P\mbox{-a.s.}
\end{equation}
$X, X_n\colon [0,T]\to H$ are $\P$-a.s. continuous and
\begin{equation}
X_n\to X\quad\mbox{weakly in } L^{p_1}(0,T;V), \mbox{weakly-star in } L^{\infty}(0,T;H),\ \ \P\mbox{-a.s.}
\end{equation}
\end{Theorem}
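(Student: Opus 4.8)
The plan is to derive Theorem~\ref{t2.3} by combining Theorems~\ref{t2.1} and~\ref{t2.2}: I would transport the pathwise solutions $y$, $y_n$ of the random equations \eqref{e1.2} and $(1.2)_n$ along the change of variables \eqref{e1.4} and $(1.4)_n$, and then check that this transformation is compatible with the convergence $y_n\to y$. Throughout, fix $\omega$ (all conclusions are pathwise, $\P$-a.s.) and set $U(t)=e^{\sum_{j=1}^N\beta_j(t)B_j(t)}$, $U_n(t)=e^{\sum_{j=1}^N\beta_j^n(t)B_j^n(t)}$. By assumption~\textbf{(v)} the $B_j(t)$ commute at each fixed $t$ and generate $C_0$-groups on $H$ and on $V$, and since $\sup_{t\in[0,T]}|\beta_j(t)|<\infty$, $U(t)$ is for every $t$ an isomorphism of $H$, of $V$ and of $V'$, with $U(t)^{-1}=e^{-\sum_j\beta_j(t)B_j(t)}$, which, because $B_j^\ast=-B_j$, is also the $H$-adjoint $U(t)^\ast$ of $U(t)$; moreover $\sup_t(\|U(t)\|_{L(V)}+\|U(t)\|_{L(H)}+\|U(t)^{-1}\|_{L(V)}+\|U(t)^{-1}\|_{L(H)})<\infty$. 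Hence $X(t):=U(t)y(t)$ inherits from \eqref{e1.12} the regularity $X\in L^{p_1}(0,T;V)\cap C([0,T];H)$, and it is progressively measurable since $y$ is and $t\mapsto U(t)$ is strongly continuous.

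\emph{Step 1 ($X=Uy$ solves \eqref{e1.1}).} Apply the It\^o formula to $X(t)=U(t)y(t)$. Using the commutativity in~\textbf{(v)}, differentiation in $\beta_j$ gives $\partial_{\beta_j}U=B_j(t)U$ and $\partial^2_{\beta_j}U=B_j^2(t)U$; differentiation in $t$ at frozen $\beta_j$ gives a drift which, after multiplication by $U(t)^{-1}$, equals $\sum_j\int_0^{\beta_j(t)}e^{-sB_j(t)}\dot B_j(t)e^{sB_j(t)}\,ds$, i.e.\ the second term of $\Lambda$ in \eqref{e1.3}. Since $y(\cdot,\omega)$ is of finite variation with values in $V'$, there is no cross-variation term and
\[
dX=\sum_j B_j(t)X\,d\beta_j+\tfrac12\sum_j B_j^2(t)X\,dt+U(t)\Big(\sum_j\int_0^{\beta_j(t)}e^{-sB_j(t)}\dot B_j(t)e^{sB_j(t)}\,ds\Big)y\,dt+U(t)\tfrac{dy}{dt}\,dt.
\]
Substituting $\tfrac{dy}{dt}=g-\Lambda(t)y$ from \eqref{e1.13} (for a fixed measurable selection of $A(t)X(t)$), using $U(t)g(t)=f(t)$ since $g(t)=U(t)^{-1}f(t)$, and $U(t)e^{-\sum_j\beta_j(t)B_j(t)}A(t)e^{\sum_j\beta_j(t)B_j(t)}y=A(t)X$, the two integral drift terms cancel and one is left precisely with the integral identity \eqref{eq2.9}; thus $X$ solves \eqref{e1.1} in the sense of \eqref{eq2.8}--\eqref{eq2.9}. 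Conversely, if $X$ satisfies \eqref{eq2.8}--\eqref{eq2.9}, set $y(t):=U(t)^{-1}X(t)$ and run the computation in reverse; the martingale terms cancel, $y$ satisfies \eqref{e1.12}--\eqref{e1.13}, and Theorem~\ref{t2.1} gives uniqueness of such $y$, hence of $X$. Since $\beta_j^n$ is smooth, the analogue for \eqref{e1.5} follows with the ordinary chain rule and no martingale term: $X_n(t)=U_n(t)y_n(t)$ is the unique solution of \eqref{e1.5}, with $y_n$ the unique solution of $(1.2)_n$ from Theorem~\ref{t2.2}, and $X_n\in L^{p_1}(0,T;V)\cap C([0,T];H)$.

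\emph{Step 2 (convergence $X_n\to X$).} From \eqref{e1.15a} and the uniform convergence $\beta_j^n\to\beta_j$ on $[0,T]$, the operators $\sum_j\beta_j^n(t)B_j^n(t)$ converge strongly to $\sum_j\beta_j(t)B_j(t)$ uniformly in $t$, with uniformly controlled growth bounds; a Trotter--Kato argument then gives $U_n(t)\to U(t)$ and $U_n(t)^{-1}\to U(t)^{-1}$ strongly on $H$ and on $V$ (hence on $V'$ by reflexivity), uniformly in $t$, with $M:=\sup_n\sup_t(\|U_n(t)\|_{L(V)}+\|U_n(t)\|_{L(H)}+\|U_n(t)^{-1}\|_{L(V')})<\infty$. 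Together with the $L^{p_1}(0,T;V)\cap L^\infty(0,T;H)$ bounds on $(y_n)$ underlying Theorem~\ref{t2.2}, this makes $(X_n)$ bounded in $L^{p_1}(0,T;V)\cap L^\infty(0,T;H)$, hence weakly (resp.\ weakly-$\ast$) relatively compact there. Finally, for any $\varphi\in L^{p_1'}(0,T;V')$,
\[
\int_0^T\langle X_n(t),\varphi(t)\rangle\,dt=\int_0^T\langle y_n(t),U_n(t)^{-1}\varphi(t)\rangle\,dt\;\longrightarrow\;\int_0^T\langle y(t),U(t)^{-1}\varphi(t)\rangle\,dt=\int_0^T\langle X(t),\varphi(t)\rangle\,dt,
\]
because $U_n^{-1}\varphi\to U^{-1}\varphi$ strongly in $L^{p_1'}(0,T;V')$ (dominated convergence, via $M$) while $y_n\rightharpoonup y$ in $L^{p_1}(0,T;V)$ by Theorem~\ref{t2.2}. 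Hence every weak limit point of $(X_n)$ equals $X$, so the whole sequence converges weakly in $L^{p_1}(0,T;V)$, and the weak-$\ast$ convergence in $L^\infty(0,T;H)$ is obtained identically with test functions in $L^1(0,T;H)$.

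\emph{Main obstacle.} The routine-looking cancellation in Step~1 is where the real work hides: making the It\^o formula for $U(t)y(t)$ rigorous when the $B_j(t)$ are time-dependent and unbounded in $H$ while $y$ lies only in $V$ with $dy/dt\in V'$ --- in particular checking that $s\mapsto e^{sB_j(t)}$ is strongly $C^1$ into $L(V)$ and $L(V')$ and that the stochastic integral $\int_0^\cdot B_j(s)X(s)\,d\beta_j(s)$ is well defined. A clean route is to prove the identity first for the regularized problem \eqref{e1.5}, where all computations are classical, and then pass to the limit by Step~2; equivalently one approximates $y$ by smoother functions and uses the continuity of all operators involved. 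The second genuinely technical point is the uniform-in-$t$ strong convergence $U_n\to U$ of the groups with only strongly convergent, time-dependent generators $\sum_j\beta_j^n(t)B_j^n(t)$, which is precisely what legitimises the weak$\times$strong passage to the limit in Step~2.
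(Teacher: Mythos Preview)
Your approach is exactly the one the paper intends: the paper does not give a separate proof of Theorem~\ref{t2.3} at all, stating only ``By Theorem~\ref{t2.1} and Theorem~\ref{t2.2} we find that\ldots'' and relying on the transformation \eqref{e1.4} discussed in the introduction. Your Step~1 (It\^o formula for $U(t)y(t)$, with the $\dot B_j$-drift matching the second term of $\Lambda$) and Step~2 (strong convergence of $U_n$ combined with weak convergence of $y_n$) are precisely the details the paper leaves to the reader, and the obstacles you flag---rigour of the It\^o formula with unbounded time-dependent $B_j$ and the uniform-in-$t$ Trotter--Kato convergence---are real but not addressed in the paper either.
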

The precise meaning of Theorem \ref{t2.2} and Theorem \ref{t2.3}
is the structural stability of the It\^o stochastic differential
equation \eqref{e1.1}
and of its Stratonovich counterpart (1.1)'. As a matter of fact, as mentioned earlier,
all these results can be reformulated in terms of Stratonovich equation (1.1)'. \\
One of the main consequences of Theorem \ref{t2.2} is that the
Stratonovich stochastic equation is stable with respect to smooth
approximations of  the process $B(t)Xd\beta(t)$. On the other
hand, the general existence theory for infinite dimensional
stochastic differential equations with linear multiplicative noise
(see e.g., \cite{DaP,DZ}) is not applicable in present situation
due to the fact that the noise coefficient $x\to B(t)x$ is not
bounded on the basic space $H$. The approach we use here to treat equation \eqref{e1.13}
relies on Brezis-Ekeland variational principle \cite{BrEk0}, \cite{BrEk} which allows to reduce nonlinear
evolution equations of potential type to convex optimization problems. (On these lines see also
\cite{B}-\cite{barbu-1987}, \cite{rr}, \cite{vis}).\\
The more general case of nonlinear monotone and demicontinuous operators
$A(t)\colon V\to V'$ is ruled out from present theory and might expect however to extend the theory
to this general case by using Fitzpatrick function formalism (see \cite{rr}, \cite{vis}).
%%%%%%%%%%%

\bigskip

As in \cite{brz-cap-flan}, see Corollary on p. 438, we can define a solution to problem $(1.1)'$ for any \textbf{deterministic} continuous  function $\beta:\mathbb{R}_+\to \mathbb{R}^N$. The result from \cite{brz-cap-flan} was a generalisation of a analogous result from Sussmann's well known paper \cite{Sussmann_1978}, see also Doss \cite{Doss_1977}. We will formulate our result in the same fashion as in \cite{brz-cap-flan}, i.e. the result contains implicitly a definition. Let us observe, that in this case, we prove the existence for any multidimensional continuous signal, thus a signal more general than a rough signal from the theory of rough paths. However, this is due to the assumption of the commutativity of the vector fields $B_j$, $j=1,\cdots,N$.  In the result below, we need deterministic versions of assumptions {\bf(ii)},  {\bf(vi)}. Note that the assumption {\bf(iv)} is now redundant.

\textbf{(ii')} \emph{$A(t)y=\partial\psi(t,y)$, a.e. $t\in(0,T)$, $\forall y\in V$, where $\psi\colon (0,T)\times V\to\R$ is convex and lower-semicontinuous in $y$ on $V$ and measurable in $t$ on $[0,T]$. There exist $\alpha_i>0$, $\gamma_i\in\R$, $i=1,2$, $1<p_1\le p_2<\infty$, such that
\begin{equation}\label{e1.10z}
\gamma_1+\alpha_1\,\|y\|_V^{p_1}\le \psi(t,y)\le \gamma_2+\alpha_2\,\|y\|_V^{p_2},\quad \forall y\in V.
\end{equation}}\\[6pt]
\textbf{(vi)}  $f\in L^{p_1'}(0,T;V^\prime)$,

\begin{Theorem}\label{thm-sussmann} Assume that the  assumptions {\bf(i)}, {\bf(iii)}, {\bf(v)} as well as {\bf(ii')} and  {\bf(vi')}   are satisfied. Then for every $x\in V$ and every $\beta \in C([0,T];\mathbb{R}^N)$, the problem
\begin{equation}\label{e2.14}\lk\{
\begin{array}{l}
dX(t)+A(t)X(t)\,dt =\ds\sum_{j=1}^N B_j(t)X(t)\,d\beta_j(t)+f(t)\,dt,\quad t\in[0,T]
\\ \\
X(0)=x
\end{array}\rk.
\end{equation}
has a unique solution $X\in L ^{p_1}(0,T;V)\cap  C([0,T];H)$ in the following sense. %\\
\begin{trivlist}
\item[(i)] For every $\beta \in C^1([0,T];\mathbb{R}^N)$, the problem \eqref{e2.14} has a unique solution $X\in L ^{p_1}(0,T;V)\cap  C([0,T];H)$.
\item[(ii)] If $\beta_n \in C^1([0,T];\mathbb{R}^N)$ and $\beta_n \to \beta$ in $C([0,T];\mathbb{R}^N)$ and $X_n \in L ^{p_1}(0,T;V)\cap  C([0,T];H)$ is the (unique) solution to
 the problem \eqref{e2.14} corresponding to $\beta_n$, then $X_n \to X$ in $\mbox{weakly in } L^{p_1}(0,T;V), \mbox{weakly-star in } L^{\infty}(0,T;H),\ \ \P\mbox{-a.s.}$
\end{trivlist}
\end{Theorem}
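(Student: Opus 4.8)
The plan is to reduce everything to the already-established machinery of Theorems \ref{t2.1}--\ref{t2.3}, now run in a purely deterministic setting with the Brownian paths replaced by a fixed continuous signal $\beta\in C([0,T];\R^N)$. First I would observe that when $\beta\in C^1([0,T];\R^N)$, the transformation \eqref{e1.4}, namely $X(t)=e^{\sum_j\beta_j(t)B_j(t)}y(t)$, converts \eqref{e2.14} into the random differential equation \eqref{e1.2} with $\Lambda$ given by \eqref{e1.3} and $g(t)=e^{\sum_j\beta_j(t)B_j^\ast(t)}f(t)$; this is a genuine (not merely formal) equivalence because all the manipulations — differentiating the exponential, using the commutativity $B_jB_k=B_kB_j$ from {\bf(v)} and the group property — are now classical chain-rule computations, $\beta$ being $C^1$. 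Under {\bf(i)}, {\bf(ii')}, {\bf(iii)}, {\bf(v)}, {\bf(vi')} the operator $\Lambda(t)$ inherits the structure required in Theorem \ref{t2.1}: it is again of subgradient type, $\Lambda(t)=\partial\tilde\psi(t,\cdot)$ for a convex l.s.c. $\tilde\psi$ obtained by composing $\psi$ with the (bicontinuous, $V$- and $H$-isometric up to constants) maps $e^{\pm\sum_j\beta_j(t)B_j(t)}$ and adding the lower-order integral term, and the coercivity/growth bounds \eqref{e1.10z} are preserved with possibly modified constants $\alpha_i,\gamma_i$ that now depend continuously on $\sup_{[0,T]}|\beta(t)|$ rather than on $\omega$. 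Hence Theorem \ref{t2.1} (in its deterministic incarnation, the progressive-measurability clause being vacuous here) yields a unique $y\in L^{p_1}(0,T;V)\cap C([0,T];H)\cap W^{1,p_2'}([0,T];V')$ solving \eqref{e1.13}, and then $X(t)=e^{\sum_j\beta_j(t)B_j(t)}y(t)$ is the unique solution of \eqref{e2.14} in $L^{p_1}(0,T;V)\cap C([0,T];H)$; since $e^{\sum_j\beta_j(\cdot)B_j(\cdot)}$ is a continuous family of isomorphisms of $V$ and of $H$, this transfer preserves both regularity and uniqueness. This establishes part (i) and, taking $\beta$ fixed, half of the main assertion.

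For part (ii), suppose $\beta_n\to\beta$ in $C([0,T];\R^N)$ with $\beta_n\in C^1$, and let $X_n$ be the corresponding solutions, $y_n$ the associated solutions of the transformed equations $(1.2)_n$, so that $X_n(t)=e^{\sum_j\beta_j^n(t)B_j(t)}y_n(t)$. I would keep $A_n=A$, $B_j^n=B_j$, $f_n=f$ fixed and let only the signal vary; the role of the convergence hypotheses \eqref{e1.14}--\eqref{e1.15a} of Theorem \ref{t2.2} is now played by the observation that $\Lambda_n(t)\to\Lambda(t)$ in the $\Gamma$-convergence sense, uniformly in $t\in[0,T]$, simply because $\beta_n(t)\to\beta(t)$ uniformly and the maps $(\lambda,y)\mapsto e^{-\sum_j\lambda_jB_j(t)}\partial\psi(t,\cdot)e^{\sum_j\lambda_jB_j(t)}y$ and the integral term depend continuously on $\lambda$ by {\bf(v)} ($B_j\in C^1([0,T];L(V,H))$ and the group is strongly continuous on $V$); equivalently $\tilde\psi_n(t,\cdot)\to\tilde\psi(t,\cdot)$ and $\tilde\psi_n^\ast(t,\cdot)\to\tilde\psi^\ast(t,\cdot)$ pointwise. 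One must also check that the coercivity constants for $\Lambda_n$ are bounded below (and the growth constants above) uniformly in $n$, which follows from $\sup_n\sup_{[0,T]}|\beta_n(t)|<\infty$. Then the deterministic version of Theorem \ref{t2.2} gives $y_n\to y$ weakly in $L^{p_1}(0,T;V)$ and weakly-star in $L^\infty(0,T;H)$. Finally, since $e^{\sum_j\beta_j^n(t)B_j(t)}\to e^{\sum_j\beta_j(t)B_j(t)}$ uniformly in $t$ in the operator norm on $H$ (and on $V$, with uniformly bounded norms), the product $X_n(t)=e^{\sum_j\beta_j^n(t)B_j(t)}y_n(t)$ converges to $X(t)=e^{\sum_j\beta_j(t)B_j(t)}y(t)$ weakly in $L^{p_1}(0,T;V)$ and weakly-star in $L^\infty(0,T;H)$ — a strongly-convergent multiplier against a weakly-convergent sequence, so the product converges in the appropriate weak topology. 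This $X$ does not depend on the approximating sequence (by the uniqueness in part (i) when $\beta$ happens to be $C^1$, and in general by a standard diagonal argument comparing two approximating sequences), so it legitimately deserves to be called \emph{the} solution of \eqref{e2.14} for the continuous signal $\beta$, and part (ii) holds.

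The main obstacle I anticipate is not the limit passage itself but verifying, with uniform-in-$n$ constants, that the transformed operators $\Lambda_n(t)$ genuinely satisfy the hypotheses {\bf(i)}--{\bf(iv)} of Theorem \ref{t2.2}: one needs the conjugate functions $\tilde\psi_n^\ast$ to behave well under the conjugation by the exponential groups, and one needs the lower-order term $\sum_j\int_0^{\beta_j^n(t)}e^{-sB_j(t)}\dot B_j(t)e^{sB_j(t)}y\,ds$ — which is affine, not of subgradient type — to be absorbed as a bounded perturbation without destroying the variational (Brezis--Ekeland) structure on which Theorems \ref{t2.1} and \ref{t2.2} rest. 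This is precisely the point where {\bf(iii)} (the bound $\psi(t,-y)\le C_1\psi(t,y)+C_2$) is used, to control the effect of the sign-indefinite exponential factors on the coercivity of $\tilde\psi_n$; once that bookkeeping is done uniformly in $n$, everything else is a routine transfer of the already-proven theorems through the bicontinuous change of variables \eqref{e1.4}.
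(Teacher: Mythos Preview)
Your proposal is correct and follows exactly the route the paper intends: the paper does not give a separate proof of Theorem~\ref{thm-sussmann} but sets it up as an immediate consequence of the deterministic versions of Theorems~\ref{t2.1}--\ref{t2.3}, via the change of variables \eqref{e1.4}, with $A_n=A$, $B_j^n=B_j$, $f_n=f$ held fixed and only the signal $\beta_n$ varying. Your identification of how the affine perturbation $\Gamma(t)$ is handled (via $\langle\Gamma(t)y,y\rangle=0$ from {\bf(v)}, not by absorbing it into the subgradient) and of where hypothesis {\bf(iii)} enters matches the paper's proof of Theorem~\ref{t2.1}; in effect you have written out the details that the paper leaves implicit.
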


From Theorem \ref{thm-sussmann} we infer that in the framework of Theorem \ref{thm-sussmann} but with $\beta$ being a Brownian motion, the problem \eqref{e1.1} generates  a random dynamical system on $H$. In an obvious way we have the following Corollary

\begin{Corollary}\label{cor-RDS} Assume that the  assumptions {\bf(i)}, {\bf(iii)}, {\bf(v)} as well as {\bf(ii')} and  {\bf(vi')}   are satisfied.
Assume that $\beta$ in a standard canonical two-sided $\mathbb{R}^N$-valued  Brownian
motion on  a filtered probability space $\{\Omega,\mathcal F,\{\mathcal F_t\}_{t\ge 0},\P\}$, where $\Omega=\{ \omega \in C(\mathbb{R},\mathbb{R}^N): \omega(0)=0\}$. Let us define a map
 \[\vartheta:\mathbb{R}\times\Omega\ni (t,\omega)\mapsto
\vartheta_{t}\omega=\omega(\cdot +t) -\omega(0)\in \Omega. \]
 Then there exists a map
\[\varphi: \mathbb{R}^+\times\Omega\times H \ni (t,\omega,x) \mapsto
\varphi(t,\omega)x \in H\]
such that a pair $(\phi,\vartheta)$ is a random dynamical system on $H$, see for instance Definition 2.1 in \cite{Brz+Li_2006}, and, for each $s\in\mathbb{R}$ and each $x\in H$,  the process $X$, defined for $\omega \in \Omega$ and $t \geq s$ as
\begin{equation}\label{eqn:4.12}
X(t,s;\omega,x):=\varphi(t-s;\vartheta_{s}\omega)x,
\end{equation}
 is a solution to problem \eqref{e1.1} over the time interval $[s,\infty)$ with an initial data given at time $s$.
\end{Corollary}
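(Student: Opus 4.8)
The plan is to read off the random dynamical system directly from the \emph{pathwise} solution operator furnished by Theorem~\ref{thm-sussmann}. Since for every deterministic continuous signal $\beta$ problem \eqref{e2.14} has a unique solution in the sense made precise there (and, via Theorem~\ref{t2.1}, also for initial data in $H$), the rule ``datum $\mapsto$ solution at time $t$'' is well defined; write $\Phi_{s,t}(\beta)x\in H$ for the value at time $t\ge s$ of the unique solution whose datum $x$ is prescribed at time $s$. First I would note that this construction is consistent under restriction of the time interval, so that gluing the solutions on $[0,m]$, $m\in\N$, produces by uniqueness a solution on $[0,\infty)$, and $\Phi_{s,t}$ is defined for all $0\le s\le t<\infty$. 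The Corollary then reduces to three facts about $\Phi$: an evolution (flow) identity, its conversion into a cocycle over the Wiener shift $\vartheta$, and joint measurability.

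For the \emph{evolution identity} I would argue by uniqueness: for $0\le s\le t\le u$ the restriction to $[t,u]$ of the solution on $[s,u]$ is a solution on $[t,u]$ with datum $\Phi_{s,t}(\beta)x$ at time $t$ --- immediate for $\beta\in C^1$ by part~(i) of Theorem~\ref{thm-sussmann}, and inherited for general continuous $\beta$ through the stability statement~(ii) together with uniqueness of the limit. Hence $\Phi_{s,u}(\beta)=\Phi_{t,u}(\beta)\circ\Phi_{s,t}(\beta)$ and $\Phi_{s,s}(\beta)=\mathrm{id}_H$, while continuity of $t\mapsto\Phi_{s,t}(\beta)x$ is part of the assertion $X\in C([0,T];H)$. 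For the \emph{cocycle property}: equation \eqref{e2.14} is driven by the $d\beta_j$, hence depends on $\beta$ only through its increments, so $\Phi_{s,t}(\beta)=\Phi_{s,t}(\beta+c)$ for every constant $c\in\R^N$; combining this with the time-homogeneity of $A$, $B_j$, $f$, one gets for $\omega\in\Omega$ that $\Phi_{s,s+t}(\omega)=\Phi_{0,t}(\widetilde\omega)$ with $\widetilde\omega(r):=\omega(s+r)-\omega(s)$, which is $\vartheta_s\omega$ restricted to $[0,t]$. Setting $\varphi(t,\omega):=\Phi_{0,t}(\omega)$, the evolution identity with $(s,t,u)=(0,s,s+t)$ yields $\varphi(t+s,\omega)=\varphi(t,\vartheta_s\omega)\circ\varphi(s,\omega)$ and $\varphi(0,\omega)=\mathrm{id}_H$. (If one insists on genuinely $t$-dependent coefficients, the identical computation produces a cocycle over the skew product of $\vartheta$ with the time shift on path space, which is the proper non-autonomous formulation.)

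It remains to verify that $(t,\omega,x)\mapsto\varphi(t,\omega)x$ is measurable, which I regard as the main technical point. The route I would take: for $C^1$-signals the solution map $\beta\mapsto X$ is measurable via the representation $X(t)=e^{\sum_j\beta_j(t)B_j(t)}y(t)$ of Theorems~\ref{t2.1} and~\ref{t2.3}, since the data $\Lambda(\cdot)$, $g(\cdot)$ of the random equation \eqref{e1.2} depend measurably on $\omega$; for general $\omega\in\Omega$, approximate by the piecewise-linear interpolants $\beta^{(m)}$ over the mesh $\{kT/m\}_{k=0}^m$, which converge to $\omega$ in $C([0,T];\R^N)$ and depend measurably on $\omega$, and apply Theorem~\ref{thm-sussmann}(ii). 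Because that stability is available only in weak topologies, I would not pass to pointwise-in-$t$ limits; instead I would observe that $\omega\mapsto X(\cdot;\omega,x)$ is weakly measurable into $L^{p_1}(0,T;V)$ (for $g\in L^{p_1'}(0,T;V')$ the map $\omega\mapsto\int_0^T\langle X(r;\omega,x),g(r)\rangle\,dr$ is the pointwise limit of the measurable maps $\omega\mapsto\int_0^T\langle X^{(m)}(r;\omega,x),g(r)\rangle\,dr$), hence strongly measurable there by Pettis' theorem and separability of $V$; since the solution in fact lies in the separable Banach space $E:=C([0,T];H)\cap L^{p_1}(0,T;V)$, which embeds continuously and injectively into $L^{p_1}(0,T;V)$, the Lusin--Souslin theorem upgrades this to measurability into $E$, and composing with the continuous evaluation $u\mapsto u(t)$ gives measurability of $\varphi(t,\omega)x$ in $(t,\omega)$; the dependence on $x$ is treated the same way. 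With $(\Omega,\mathcal F,\P,\vartheta)$ the standard ergodic Wiener metric dynamical system, the pair $(\varphi,\vartheta)$ then satisfies Definition~2.1 of \cite{Brz+Li_2006}, formula \eqref{eqn:4.12} merely rewrites the cocycle identity, and $X(\cdot,s;\omega,x)$ solves \eqref{e1.1} on $[s,\infty)$ by the pathwise existence in Theorem~\ref{thm-sussmann}.

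I expect this measurability step to be the main obstacle: since the stability in Theorem~\ref{thm-sussmann} is only in weak topologies, measurability cannot be pushed through a strong limit and must instead be obtained through weak measurability, Pettis' theorem and a Lusin--Souslin argument; one must also take some care that the passage from $C^1$ to general continuous signals is compatible with the restriction-to-subintervals used in the evolution identity, and --- as noted --- with the time-homogeneity that the clean cocycle statement tacitly uses.
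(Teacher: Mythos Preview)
The paper does not give a proof of this Corollary at all: it simply asserts that the statement follows ``in an obvious way'' from Theorem~\ref{thm-sussmann}. Your argument supplies precisely the details the paper suppresses, and follows the route the authors clearly have in mind---define the solution map pathwise via Theorem~\ref{thm-sussmann}, extract the flow property from uniqueness, and convert it into a cocycle over the Wiener shift by the increment--dependence of the equation. In that sense there is nothing to compare: you have written out what the paper leaves implicit.

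Two small points. First, piecewise-linear interpolants are not $C^1$, so part~(i) of Theorem~\ref{thm-sussmann} does not apply to them directly; use instead, say, convolution with a smooth mollifier or polygonal interpolation followed by smoothing, both of which depend measurably on $\omega$ and converge in $C([0,T];\mathbb{R}^N)$. Second, your caveat about time-homogeneity is entirely appropriate and arguably more careful than the paper itself: the hypotheses {\bf(ii')}, {\bf(vi')} allow $t$-dependent $A$, $f$, and $B_j$, whereas the clean cocycle identity $\varphi(t+s,\omega)=\varphi(t,\vartheta_s\omega)\circ\varphi(s,\omega)$ genuinely requires the autonomous case; your remark that the non-autonomous version yields a cocycle over the skew product with the time shift is the correct formulation in that generality. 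The measurability argument via Pettis and Lusin--Souslin is a reasonable way to extract pointwise-in-$t$ measurability from the weak-topology stability of Theorem~\ref{thm-sussmann}(ii); the paper does not address this at all.
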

\begin{Remark}\label{rem-sussmann}\em
Theorem \ref{thm-sussmann} provides a solution to problem \eqref{e2.14} for every continuous path. Our main result provides a natural interpretation of this solution in the case when $\beta$ is a Brownian motion. One can also provide a similar interpretation when $\beta$ is a fractional, see for instance \cite{DMP_2005}. \\
Corollary allows one to investigate the existence of random attractors, see \cite{Brz+Li_2006}. \\
These questions will be investigated in the future works.
\end{Remark}

%%%%%%%%%%%
 \section{PROOFS}
\emph{Proof of Theorem 2.1}\\
For simplicity we consider the case $N=1$, that is $B_j=B$, $\beta_j=\beta$ for all $j$.\\
We note first that though the operator
$\Lambda(t)=\Lambda(t,\omega)$ is $\P$-a.e $\omega\in\Omega$,
maximal monotone from $V$ to $V'$ the standard existence theory
(see e.g.,  \cite[pag. 177]{barbu-2010}) does not apply here. This
is, however, due to the general growth condition \eqref{e1.10} on $\psi(t,\cdot)$
and implicitly on $A(t)$ as well as due to the multivaluedness of $A(t)$.\\
So we shall use a direct approach which makes use of the
variational structure of problem \eqref{e1.2}. (On these lines see
also \cite{B}, \cite[page 280]{barbu-1987}). Namely, we can write
\begin{equation}
\Lambda(t)=\partial\varphi(t,\cdot)+\Gamma(t),\quad \forall t\in [0,T].
\end{equation}
Here $\varphi\colon [0,T]\times V\to\R$ is given by
\begin{equation}\label{e3.2}
\varphi(t,y)=\psi(t,e^{\beta(t)B(t)}y)
\end{equation}
and
\[
\Gamma(t)y=\int_0^{\beta(t)}e^{-s\,B(t)}\dot B(t)e^{s\,B(t)}y\,ds,\qquad \forall y\in H, t\in [0,T]
\]
where $\dot B=\frac{d\ }{dt}B(t)$. We fix $\omega\in\Omega$.\\
By the conjugacy formulae \eqref{eq-1.9} and  \eqref{eq-1.8} we
now may equivalently write \eqref{e1.2} (or \eqref{e1.13}) as
\begin{equation}\lk\{
\begin{array}{l}
\varphi(t,y(t))+\varphi^\ast(t,u(t))=\langle y(t), u(t)\rangle, \qquad\mbox{a.e. } t\in [0,T]\\
y'(t)+\Gamma(t)y(t)=-u(t)+g(t), \qquad\mbox{a.e. } t\in [0,T]
\end{array}\rk.
\end{equation}
while
\[
\varphi(t,\bar y)+\varphi^\ast(t,\bar u)\ge \langle \bar y,\bar u\rangle
\]
for all $\langle \bar y,\bar u\rangle\in L^{p_1}(0,T;V)\times L^{p_2'}(0,T;V')$.

Thus following a well known idea due to Brezis and Ekeland (see
e.g., \cite{,BrEk0,BrEk}) we are lead to the optimization problem
\begin{multline}\label{e3.4}
\mbox{Min}\{\int_0^T \big(\varphi(t,y(t))+\varphi^\ast(t,u(t))-
\langle u(t),y(t)\rangle\big)\,dt : \\y'+\Gamma(t)y=-u+g, \mbox{
a.e. } t\in [0,T] ; \, y(0)=y_0, \\ y\in L^{p_1}(0,T;V), u\in
L^{p_2'}(0,T;V')\}.
\end{multline}
Equivalently
\begin{multline}\label{e3.5}
\mbox{Min}\{\int_0^T \big(\varphi(t,y(t))+\varphi^\ast(t,u(t))-
\langle g(t),y(t)\rangle\big)\,dt
+\tfrac12\big(|y(T)|^2-|y_0|^2\big): \\y'+\Gamma(t)y=-u+g, \mbox{
a.e. } t\in [0,T] ; \, y(0)=y_0, \\ y\in L^{p_1}(0,T;V), u\in
L^{p_2'}(0,T;V')\}.
\end{multline}
Here we have used (for the moment, formally) the integration by parts formula
\begin{multline*}
-\int_0^T\langle u(t),y(t)\rangle\,dt=\tfrac12\big(|y(T)|^2-|y_0|^2\big)\\
+\int_0^T \langle\Gamma(t)y(t),y(t)\rangle\,dt-\int_0^T\langle g(t),u(t)\rangle\,dt
\end{multline*}
and hypothesis (v) which implies that
$\langle\Gamma(t)y,y\rangle=0$. Of course the equivalence of
\eqref{e3.4} and \eqref{e3.5} is valid only if the above equality
is true which is not always the case in absence of some additional properties
of  minimizer $y$ to allow integration by parts in $\int_0^T\langle u(t),g(t)\rangle\,dt$.
In the following we shall prove however that equation
\eqref{e3.5} has at least one solution and show consequently that
it is also a solution to equation \eqref{e1.13}.
\begin{Lemma}\label{L3.1}
There is a solution $y^\ast\in L^{p_1}(0,T;V)\cap
W^{1,p_2'}([0,T];V')$ to equation \eqref{e3.5}.
\end{Lemma}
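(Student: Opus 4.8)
The plan is to establish existence for the optimization problem \eqref{e3.5} by the direct method of the calculus of variations. First I would fix $\omega\in\Omega$ and note that the constraint set is nonempty: since $B(t)$ generates a $C_0$-group on $V$ and $V'$ and $\Gamma(t)\in L(H,H)$ depends continuously on $t$ (by hypothesis (v) and the fact that $s\mapsto e^{sB(t)}$ is a group), the linear Cauchy problem $y'+\Gamma(t)y = -u+g$, $y(0)=y_0$, has for each admissible $u\in L^{p_2'}(0,T;V')$ a unique solution $y$ by the standard theory of linear evolution equations; in particular one can pick some $u$ (e.g. $u$ bounded) for which the cost is finite, using the upper bound in \eqref{e1.10} together with \eqref{e1.7} to estimate $\varphi$ and $\varphi^\ast$. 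I would then denote the (nonnegative, by \eqref{e1.9}) integrand by $J(y,u)$ and let $d^\ast=\inf J\ge 0$, picking a minimizing sequence $(y_k,u_k)$.

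Next I would derive a priori bounds. From the lower bound $\psi(t,z)\ge \gamma_1+\alpha_1\|z\|_V^{p_1}$ and the invertibility of $e^{\beta(t)B(t)}$ on $V$ we get $\varphi(t,y)\ge \gamma_1 + c\|y\|_V^{p_1}$ for a constant $c=c(\omega)>0$, so boundedness of $\int_0^T\varphi(t,y_k)$ yields $(y_k)$ bounded in $L^{p_1}(0,T;V)$. Similarly the bound on $\int_0^T\varphi^\ast(t,u_k)$ combined with the dual growth estimate (the conjugate of a function behaving like $\|\cdot\|_V^{p_2}$ behaves like $\|\cdot\|_{V'}^{p_2'}$, up to affine terms, which follows from \eqref{e1.10} and the definition \eqref{e1.7}) gives $(u_k)$ bounded in $L^{p_2'}(0,T;V')$. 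The state equation then bounds $y_k'$ in $L^{p_2'}(0,T;V')$, so $(y_k)$ is bounded in $W^{1,p_2'}([0,T];V')$; by Aubin–Lions (or just weak compactness plus the continuity of the trace at $t=0$) we may pass to subsequences with $y_k\rightharpoonup y^\ast$ weakly in $L^{p_1}(0,T;V)$ and in $W^{1,p_2'}([0,T];V')$, and $u_k\rightharpoonup u^\ast$ weakly in $L^{p_2'}(0,T;V')$. The linear constraint passes to the limit under weak convergence (the map $y\mapsto y'+\Gamma(\cdot)y$ is linear and bounded from $W^{1,p_2'}$ into $L^{p_2'}(0,T;V')$), and $y^\ast(0)=y_0$ survives because the evaluation $y\mapsto y(0)$ is weakly continuous on $W^{1,p_2'}([0,T];V')$.

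The main obstacle is lower semicontinuity of the cost along this weak convergence, because of the bilinear term $-\int_0^T\langle u_k,y_k\rangle\,dt$, which is not weakly continuous for a product of two merely weakly convergent sequences. Here I would exploit the constraint to convert that term, exactly as the authors foreshadow: writing $-\langle u_k,y_k\rangle = \langle y_k'+\Gamma(t)y_k-g,y_k\rangle$ and integrating, $-\int_0^T\langle u_k,y_k\rangle\,dt = \tfrac12(|y_k(T)|^2-|y_0|^2) + \int_0^T\langle\Gamma(t)y_k,y_k\rangle\,dt - \int_0^T\langle g,y_k\rangle\,dt$, where the integration by parts $\int_0^T\langle y_k',y_k\rangle = \tfrac12(|y_k(T)|^2-|y_0|^2)$ is legitimate for elements of $L^{p_1}(0,T;V)\cap W^{1,p_2'}([0,T];V')$ with $p_1\le p_2$ (standard). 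Now $\int_0^T\langle\Gamma(t)y_k,y_k\rangle\,dt=0$ by the skew-symmetry coming from (v), the term $\int_0^T\langle g,y_k\rangle\,dt$ is weakly continuous, and $\tfrac12|y_k(T)|^2$ is weakly lower semicontinuous once one knows $y_k(T)\rightharpoonup y^\ast(T)$ weakly in $H$ (again from weak continuity of the endpoint evaluation on $W^{1,p_2'}\cap L^{p_1}(0,T;V)$ into $H$, using an interpolation/trace argument). Thus the cost in the form \eqref{e3.5} is weakly l.s.c.: $\int_0^T\varphi(t,y^\ast)\,dt\le\liminf\int_0^T\varphi(t,y_k)\,dt$ and $\int_0^T\varphi^\ast(t,u^\ast)\,dt\le\liminf$ by convexity and lower semicontinuity of $\varphi,\varphi^\ast$ (Ioffe's theorem / convex integrands are weakly l.s.c.), while the remaining terms converge. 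Hence $J(y^\ast,u^\ast)\le d^\ast$, so $(y^\ast,u^\ast)$ is a minimizer, and by construction $y^\ast\in L^{p_1}(0,T;V)\cap W^{1,p_2'}([0,T];V')$, which is the claim; measurability in $\omega$ is deferred to the subsequent arguments.
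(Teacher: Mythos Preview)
Your overall strategy---direct method, coercivity from the growth bounds on $\psi$ and $\psi^\ast$, weak compactness, and weak lower semicontinuity of the convex integrals and of $|y(T)|^2$---is exactly the paper's. The difference is that you mix the two functionals: your claim that the integrand is nonnegative and your worry about the bilinear term $-\int_0^T\langle u_k,y_k\rangle\,dt$ both pertain to \eqref{e3.4}, not to \eqref{e3.5}. The lemma is about \eqref{e3.5}, whose cost contains only the \emph{linear} term $-\langle g,y\rangle$ and the endpoint term $\tfrac12|y(T)|^2$; there is no bilinear obstacle to lower semicontinuity, and the paper works directly with \eqref{e3.5} throughout without ever converting.

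Your conversion also rests on a claim that is not correct in general: the integration by parts $\int_0^T\langle y_k',y_k\rangle\,dt=\tfrac12(|y_k(T)|^2-|y_0|^2)$ is \emph{not} standard for $y_k\in L^{p_1}(0,T;V)\cap W^{1,p_2'}([0,T];V')$ when $p_1<p_2$, since then $\tfrac{1}{p_1}+\tfrac{1}{p_2'}>1$ and $\langle y_k',y_k\rangle$ need not lie in $L^1(0,T)$ a priori. This is precisely the subtlety the authors flag just before the lemma (``the equivalence of \eqref{e3.4} and \eqref{e3.5} is valid only if the above equality is true which is not always the case'') and resolve only \emph{after} Lemma~\ref{L3.1}, for the minimizer itself, using hypothesis~(iii); see \eqref{e3.15}--\eqref{e3.16}. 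If you drop the detour through \eqref{e3.4} and argue lower semicontinuity directly on \eqref{e3.5}, the issue disappears and your argument coincides with the paper's.
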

\begin{proof}
We note that by standard existence theory of linear evolution equations for each $u\in L^{p_2'}(0,T;V')$
there is a unique solution $y\in L^{p_2}(0,T;V)\cap W^{1,p_2'}([0,T];V')\subset C([0,T];H)$
to equation
\[
y'+\Gamma(t,y)=-u+g,\quad\mbox{a.e. } t\in[0,T],\quad y(0)=y_0.
\]
By assumptions \eqref{e1.10} and \eqref{e1.11} we have
\begin{equation}
\tilde\gamma_1+\tilde\alpha_1\,\|y\|_V^{p_1}\le \varphi(t,y)\le
\tilde\gamma_2+\tilde\alpha_2\,\|y\|_{V}^{p_2},\qquad \forall y\in V
\end{equation}
and
\begin{equation}
\bar\gamma_1+\bar\alpha_1\,\|y\|_{V'}^{p_2'}\le \varphi^\ast(t,y)\le
\bar\gamma_2+\bar\alpha_1\,\|y\|_{V'}^{p_1'},\qquad \forall y\in V'
\end{equation}
where $\tilde\gamma_i,\bar\gamma_i\in\R$, $\frac1{p_i}+\frac1{p_i'}=1$ and
$\tilde\alpha_i,\bar\alpha_i>0$, $i=1,2$. (Recall that $e^{s B(t)}$ is invertible).\\
Then the infimum $m^\ast$ in \eqref{e3.5} is $>-\infty$ and there are the sequences $\{y_j\}\subset L^{p_1}(0,T;V)$,
$\{u_j\}\subset L^{p_2'}(0,T;V')$ such that for all $y$
\begin{equation}\label{e3.8}
m^\ast\le \int_0^T\big(\langle\varphi(t,y_j)+\varphi^\ast(t,u_j)-\langle g,y_j\rangle\big)\,dt+\tfrac12\big(|y_j(T)|^2-|y_0|^2\big)\le m^\ast+\frac1j
\end{equation}
\begin{equation}\lk\{
\begin{array}{l}
y_j'+\Gamma(t)y_j=-u_j+g,\qquad \mbox{ in } [0,T]\\
y_j(0)=y_0.
\end{array}
\rk. %and
\end{equation} Clearly $y_j\in W^{1,p_2'}([0,T];V')$ and by
assumption  \eqref{e1.10} and inequality  \eqref{e3.8} it follows
that
\begin{equation}
\|y_j\|_{L^{p_1}(0,T;V)}+\|y_j'\|_{L^{p_2'}(0,T;V')}\le C
\end{equation}
because as easily seen by assumption ({\bf v}), $|\Gamma(t)y|_H\le C \|y\|_{V}$, $\forall y\in V$.\\
Hence on a subsequence, again denoted $y_j$, we have for $j\to\infty$
\begin{equation}
\begin{array}{l}
y_j\to y^\ast\quad \mbox{weakly in } L^{p_1}(0,T;V)\\
u_j\to u^\ast\quad \mbox{weakly in } L^{p_2'}(0,T;V')\\
y_j'\to (y^\ast)'=g-u^\ast-\Gamma(t)y^\ast\quad \mbox{weakly in } L^{p_2'}(0,T;V').
\end{array}
\end{equation}
Since the functions $y\mapsto\int_0^T\varphi(t,y)\,dt$ and $u\mapsto\int_0^T\varphi^\ast(t,u)\,dt$ are weakly lower-semicontinuous
on $L^{p_1}(0,T;V)$ and $L^{p_2'}(0,T;V')$ respectively, letting $j$ tend to infinity we obtain that
\begin{equation}\label{e3.12}
m^\ast=\int_0^T\big(\varphi(t,y^\ast)+\varphi^\ast(t,u^\ast)-\langle g,y^\ast\rangle\big)\,dt+\tfrac12\big(|y^\ast(T)|^2-|y_0|^2\big)
\end{equation}
and
\begin{equation}\label{e3.13}\lk\{
\begin{array}{l}
(y^\ast)'+\Gamma(t)y^\ast=-u^\ast+g,\qquad t\in [0,T]\\
y^\ast(0)=y_0.
\end{array}\rk.
\end{equation}
Therefore $(y^*,u^*)$ is a solution to optimization problem
\eqref{e3.5} as claimed.
\end{proof}
\begin{proof}[Proof of Theorem 2.1 (continued)]
We shall show now that $y^\ast$ given by Lemma \ref{L3.1} is a solution to \eqref{e1.13}.
To this end we notice just that without any loss of generality we may assume that $y_0=0$. Indeed
we can reduce the problem to this case by translating in problem \ref{e1.12} $y$ in $y-y_0$.

We prove now that $m^\ast=0$.
For this purpose we invoke a standard duality result for infinite dimensional convex optimal control problems,
essentially due to R.T. Rockafeller. Namely, one has (see \cite[Thm. 4.6, pag. 287]{barbu-1987})
\begin{equation}\label{e3.14}
m^\ast+\min\eqref{e3.5}'=0
\end{equation}
where \eqref{e3.5}$'$ is the dual control problem
$$
\leqno{(3.5)'}\begin{aligned}\qquad \mbox{Min}\{\int_0^T\big(\varphi(t,-p(t))+\varphi^\ast(t,v(t)) +\langle g(t),p(t)\rangle\big)\, dt+\tfrac12|p(T)|^2:\\
p'+\Gamma(t)p=v+g,\qquad t\in [0,T]\}.\end{aligned}
$$
If $(p^\ast,v^\ast)\in L^{p_1}(0,T;V)\times L^{p_2'}(0,T;V')$ is
optimal in (3.5)$'$, we have
\begin{equation}\label{e3.15}
\langle(p^\ast)',p^\ast\rangle\in L^{1}(0,T)
\end{equation}
\begin{equation}\label{e3.16}
\int_0^T\langle(p^\ast)',p^\ast\rangle\, dt=\tfrac12\big(|p^\ast(T)|^2-|p^\ast(0)|^2\big).
\end{equation}
Here is the argument. First, note that $p'$ solves
$p'+\Gamma(t)p=v+g$. We have by the identities \eqref{eq-1.9} and
\eqref{eq-1.8} and the fact that $\langle
\Gamma(t)p^\ast,p^\ast\rangle=0$,
\[
-\langle(p^\ast(t))',p^\ast(t)\rangle\le \varphi^\ast(t,v^\ast(t))+\varphi(t,-p^\ast(t))
-\langle g(t),p^\ast(t)\rangle,\qquad \mbox{a.e. } t\in (0,T)
\]
and
\[
\langle(p^\ast(t))',p^\ast(t)\rangle\le\varphi^\ast(t,v^\ast(t))+\varphi(t,p^\ast(t))
+\langle g(t),p^\ast(t)\rangle,\qquad \mbox{a.e. } t\in (0,T).
\]
Since $\varphi(t,-p^\ast)\in L^1(0,T)$ and by assumption
\eqref{e1.11}, $\varphi(t,p^\ast)\in L^1(0,T)$ too, we infer that
\eqref{e3.15} holds. Now since $p^\ast\in W^{1,p_2'}([0,T];V')\cap
L^{p_1}(0,T;V')$ we have
\[
\tfrac12 \frac{d\ }{dt}|p^\ast(t)|^2=\langle (p^\ast)'(t),p^\ast(t)\rangle, \qquad \mbox{a.e. } t\in (0,T)
\]
and by \eqref{e3.15} we get \eqref{e3.16} as claimed.\\
By (3.5)$'$ and \eqref{e3.16} we see that
\[
\min \;(3.5)'=\int_0^T\big(\varphi(t,-p^\ast)+\varphi^\ast(t,v^\ast)+
\langle v^\ast,p^\ast\rangle\big)\,dt+\tfrac12 |p^\ast(0)|^2\ge 0.
\]
Similarly, by \eqref{e3.12}, \eqref{e3.13} and by
\[
\tfrac12\big(|y^\ast(T)|^2-|y^\ast(0)|^2\big)=\int_0^T\langle
(y^\ast)',y^\ast\rangle\,dt,
\]
(the latter follows exactly as \eqref{e3.16}) we see that
\[
m^\ast=\int_0^T\big(\varphi(t,y^\ast)+\varphi^\ast(t,u^\ast)-\langle u^\ast,y^\ast \rangle \big)\,dt\ge 0.
\]
Then by \eqref{e3.14} we have that $m^\ast=0$ and therefore again
by \eqref{e3.12} we have that
\[
\begin{array}{l}
\varphi(t,y^\ast)+\varphi^\ast(t,u^\ast)=\langle u^\ast,y^\ast
\rangle,\qquad \mbox{a.e.\ in}\quad  [0,T]
\\
(y^\ast)'+\Gamma(t)y^\ast=g-u^\ast\qquad\mbox{a.e.\ in}\quad [0,T]
\end{array}
\]
and therefore $y^\ast$ is a solution to \eqref{e1.13}.\\
On the other hand, as seen earlier, we have
\begin{equation}\label{e3.17}
\tfrac12 \big(|y^\ast(t)|^2-|y^\ast(s)|^2\big)=\int_s^t \langle
(y^\ast)'(\tau),y^\ast(\tau)\rangle\,d\tau,\quad \forall \, 0\le
s\le t\le T.
\end{equation}
Hence $y^\ast\in C([0,T];H)$. The uniqueness of $y^\ast$ is immediate by \eqref{e3.17}. It remains to
be proven that $y^\ast$ is progressively measurable.\\
%%%%
To this end we note as minimum in \eqref{e3.5} the pair $(y^\ast,u^\ast)$
is the solution to Euler-Lagrange system (see e.g., \cite[page 263]{barbu-1987})
\[
\begin{array}{l}
(y^\ast)'+\Gamma(t)y^\ast=-u^\ast+g,\qquad \mbox{a.e. } t\in (0,T), \ \omega\in\Omega\\
q'-\Gamma'(t)q= -g+A(t)y^\ast,\qquad \mbox{a.e. } t\in (0,T)\\
u^\ast(t)=A(t)q(t),\qquad \mbox{a.e. } t\in (0,T), \ \omega\in\Omega\\
y^\ast(0)=y_0,\quad q(T)=-y^\ast(T).
\end{array}
\]
Since the latter two point boundary value problem has a unique
solution $(y^\ast,q)$ and is of dissipative (accretive) type it
can be solved by iteration or more precisely by a gradient
algorithm (see \cite[page 252]{barbu-1987}). In particular, we
have  $y^\ast=\lim_{k\to\infty} y_k$, $q=\lim_{k\to\infty} q_k$ weakly in $L^{p_1}(0,T;V)$ and
$u^\ast=\lim_{k\to\infty} u_k$  weakly in $L^{p_2'}(0,T;V')$ where
\[
\begin{array}{l}
y_k'+\Gamma(t)y_k=-u_k+g\qquad  t\in [0,T], \\
q_k'-\Gamma'(t)q_k= -g+A(t)y_k,\qquad  t\in [0,T],\\
u_{k+1}=u_k-A^{-1}(t)u_k+q_k,\qquad  t\in [0,T],\\
y_k(0)=y_0,\quad q_k(T)=0,\quad k=0,1,2,\ldots.
\end{array}
\]
Hence, if we start with a progressively measurable $u_0$, we see that all $u_k$ are progressively measurable and so
are $u^\ast$ and $y^\ast$.\end{proof}
\bigskip
\begin{proof}[Proof of Theorem 2.2] As in the previous case it follows that equation \eqref{e1.2}$_n$
has a unique solution $y_n\in W^{1,p_2'}([0,T];V')\cap
L^{p_1}(0,T;V)$ given by the minimization problem \eqref{e3.5}
where $g=g_n$ and $\varphi=\varphi_n$,
$\varphi^\ast=\varphi_n^\ast$. Here $\varphi_n$ is given as in
\eqref{e3.2} where $\psi=\psi_n$ and $\beta$ is replaced by
$\beta_n$ while $\varphi_n^\ast$ is the conjugate of $\varphi_n$.
We have, similarly, $\partial\psi_n=A_n$ and
$\varphi_n(t,y)=\psi_n(t,e^{\beta_n(t)y})$
\begin{multline*}
(y_n,u_n)=\mbox{arg min}\big\{\int_0^T\varphi_n(t,y(t))+\varphi^*_n(t,u(t))-\langle g_n(t),u(t)\rangle\,dt\\
+\tfrac12 (|y(T)|^2-|y_0|^2);\qquad y'+\Gamma_n y=-u+g_n,\quad
y(0)=y_0\big\}.
\end{multline*}
Here $\Gamma_n(t)y=\int_0^{\beta_n(t)} e^{-s B_n(t)}\dot B_n(t) e^{sB_n(t)}y\;ds$.\\
We see that
\[
\|y_n\|_{L^{\infty}(0,T;H)}+\|y_n\|_{L^{p_1}(0,T;V)}+\Big\|\frac{dy_n}{dt}
\Big\|_{L^{p_2'}(0,T;V')}+|y_n(T)|\le C,
\]
and this implies that on a subsequence, again denoted $\{n\}$, we have
\addtocounter{equation}{1}
\begin{equation}\label{e3.19}\lk\{
\begin{array}{l}
u_n\to \tilde u\quad \mbox{weakly in } L^{p_2'}(0,T;V')\\ \\
y_n \longrightarrow \tilde y \quad\mbox{weakly in } L^{p_1}(0,T;V)\\ \\
y_n \longrightarrow \tilde y \quad\mbox{weakly-star in } L^{\infty}(0,T;H)\\ \\
\ds\frac{dy_n}{dt}  \longrightarrow \frac{d\tilde y}{dt} \quad\mbox{weakly in }  L^{p_2'}(0,T;V')\\ \\
y_n(T)  \longrightarrow \tilde y(T) \quad\mbox{weakly in } H.
\end{array}\rk.
\end{equation}
By \eqref{e1.14}, \eqref{e1.15}  we see that $\tilde y'+\Gamma \tilde y=-\tilde u+g$.\\
Moreover, we have
\[
\begin{split}
\int_0^T\big(\varphi_n(t,y_n(t))+\varphi_n^\ast(t,u_n(t))-\langle g_n(t),y_n(t)\rangle\big)\,dt +\tfrac12 |y_n(T)|^2\\
\le \int_0^T\big(\varphi_n(t,y^\ast(t))+\varphi_n^\ast(t,u^\ast(t))-\langle g_n(t),u^\ast(t)\rangle\big)\,dt +\tfrac12 |y^\ast(T)|^2.
\end{split}
\]
where $(y^\ast,u^\ast)$ is the solution to \eqref{e3.5}.
Now by assumptions \eqref{e1.14} and  \eqref{e1.15} we have
\[
\begin{array}{l}
\varphi_n(t,y^\ast(t))\to \varphi(t,y^\ast(t))\\ \\ \varphi_n^\ast(t,u_n^\ast(t))\to \varphi(t,u^\ast(t))\\ \\
g_n(t)\to g(t)\end{array}\qquad \forall t\in[0,T],
\]
and this yields
\begin{equation}\label{e3.20}
\limsup_{n\to\infty} \int_0^T\big(\varphi_n(t,y_n)+\varphi_n^\ast(t,u_n)-\langle g_n,y_n\rangle\big)\,dt +\tfrac12 |y_n^\ast(T)|^2
-\tfrac12 |y_0|^2=0.
\end{equation}
%On the other hand
In order to pass to limit in \eqref{e3.20} we shall use \eqref{e3.19}
and the convergence of $\{\varphi_n\}$ and $\{\varphi_n^\ast\}$ mentioned above.
We set $\tilde z(t)=e^{\beta(t)\,B(t)}\tilde y(t)$, $z_n(t)=e^{\beta_n(t)\,B(t)}y_n(t)$. We have
\[
\psi_n(t,\tilde z(t))\le\psi_n(t,z_n(t))+\langle A_n(t,\tilde
z(t)),\tilde z(t)-z_n(t)\rangle,
\]
and since $\partial\psi_n^\ast=A_n^{-1}$ we have also that
\[
\psi_n^\ast(t,\theta(t))\le \psi_n^\ast(t,\theta_n(t))
+\langle A_n^{-1}(t)(t,\theta(t)),\theta(t)-\theta_n(t)\rangle,\quad \mbox{a.e. }t\in (0,T).
\]
where $\theta(t)=g(t)-\tilde y'(t)$, $\theta_n(t)=g_n(t)- y_n'(t)$.\\
Then by assumption \eqref{e1.14} and equation \eqref{e3.20} we have that
\begin{multline*}
\limsup_{n\to\infty} \int_0^T\big(\varphi_n(t,\tilde y(t))+\varphi_n^\ast(t,g(t)-\tilde y'(t))-\\
-\langle g(t),\tilde y'(t)\rangle\big)\,dt +\tfrac12 |\tilde y^\ast(T)|^2-\tfrac12 |y_0|^2=0,
\end{multline*}
and so, since as seen earlier \eqref{e1.14} implies that $\varphi_n(t,z)\to
\varphi(t,z)$, $\forall z\in V$,
$\varphi_n^\ast(t,z^\ast)\to\varphi^\ast(t,z^\ast)$, $\forall
z^\ast\in V'$, by the Fatou lemma, we have
\[
 \int_0^T\big(\varphi(t,\tilde y)+\varphi^\ast(t,\tilde u)
-\langle g,\tilde y\rangle\big)\,dt +\tfrac12 |\tilde
y^\ast(T)|^2-\tfrac12 |y_0|^2\le 0,
\]
which implies as in the previous case that $\tilde y$ is a solution to \eqref{e3.5}
and therefore to \eqref{e1.13} as claimed.
\end{proof}

\section{Examples}
The specific examples to be presented below refer to nonlinear parabolic
stochastic equations which can be written in the abstract form \eqref{e1.1} where $A(t)$ are
subpotential monotone and continuous operators from a separable Banach space $V$
to its dual $V'$.\\
We briefly present below a few stochastic PDE  to which the above theorems apply.
We use here the standard notations for spaces of integrable functions and Sobolev spaces
$W_0^{1,p}(\mathcal O), W^{-1,p'}(\mathcal O)=\big(W_0^{1,p}(\mathcal O)\big)', H^k(\mathcal O)$, $k=1,2$ on open domains
$\mathcal O\subset \R^d$.

\subsection{Nonlinear stochastic diffusion equations}
Consider the stochastic equation
\begin{equation}\label{e4.1}
\lk\{\begin{array}{l}
\begin{aligned}dX_t-\mbox{div}_\xi \,a(t,\nabla_\xi X_t)\,dt- \tfrac12\; b(t,\xi)\cdot \nabla_\xi(b(t,\xi)\cdot\nabla_\xi X_t)\,dt\hspace{1cm}\\
=b(t,\xi)\cdot\nabla_\xi X_t\,d\beta(t),\quad \mbox{ in } (0,T)\times\mathcal O\end{aligned}\\
X_0=x\quad \mbox{ in } \mathcal O\\
X_t=0 \quad \mbox{ on } (0,T)\times\partial\mathcal O
\end{array}\rk.
\end{equation}
Here $a\colon (0,T)\times\R^d\to \R^d$ is a map of gradient type, i.e.,
\[
a(t,y)=\partial j(t,y),\qquad \forall y\in\R^d, t\in (0,T)
\]
where $j\colon (0,T)\times\R^d\times\Omega\to \R$ is convex in $y$, progressively measurable in
$(t,\omega)\in[0,T)\times\Omega$ and
\begin{equation}
\gamma_1+\alpha_1\,|y|^{p_1}\le j(t,y)\le \gamma_2+\alpha_2\,|y|^{p_2}, \qquad \forall y\in\R^d, \omega\in\Omega, t\in (0,T)
\end{equation}
\begin{equation}\label{e4.3}
j(t,-y)\le c_1\;j(t,y) +c_2,\qquad \forall y\in \R^d, t\in (0,T).
\end{equation}
It should be emphasized that the mapping $r\to a(t,r)$ might be multivalued and discontinuous.
As a matter of fact if $a(t,\cdot)$ is discontinuous at $r=r_j$, but left and right continuous
(as happens by monotonicity) it is replaced by a multivalued maximal monotone mapping
$\tilde a$ obtained by filling the jumps at $r=r_j$.\\
Equation \eqref{e4.1} is of the form \eqref{e1.1} where $H=L^2(\mathcal O)$,  $V=W_0^{1,p_1}(\mathcal O)$,
$A(t)=\partial\psi(t,\cdot)$, $2\le p_1\le p_2<\infty$,
\[
\psi(t,u)=\int_{\mathcal O}j(t,\nabla u)\,d\xi,\qquad \forall u\in W_0^{1,p_1}(\mathcal O)
\]
and
\begin{equation}
B(t)u=b(t,\xi)\cdot\nabla_\xi u=\mbox{div}_\xi (b(t,\xi)u),\qquad \forall u\in W_0^{1,p_1}(\mathcal O).
\end{equation}

As regards the function $b(t,r)\colon [0,T]\times \R^d\to \R^d$ we assume that
\begin{eqnarray}
&&b(t,\cdot),\quad\frac{\partial b}{\partial r}(t,\cdot)\in \big(C([0,T];\bar{\mathcal O})\big)^d\label{e4.5a}\\\nonumber\\
&&r \to b(t,\cdot)+\alpha r \mbox{ is monotone for some }\alpha\ge 0, \label{e4.6a}\\\nonumber\\
&&\mbox{div}_\xi  b(t,\xi))=0,\quad b(t,\xi)\cdot\nu(\xi)=0\quad \forall \xi\in \partial\mathcal O\label{e4.7a}
\end{eqnarray}
where $\nu$ is the normal to $\partial\mathcal O$. (The boundary $\partial\mathcal O$ is
assumed to be of class $C^1$.)\\
Here $\mbox{div}_\xi  b$ is taken in the sense of distributions on $\mathcal O$.\\
Then (4.4) defines a linear continuous operator $B(t)$ from $V$ to
$H=L^2(\mathcal O)$ which as early seen is densely defined
skew-symmetric, that is $-B(t)\subset B^\ast(t)$ $\forall t\in
[0,T]$. Moreover, $B(t)$ is {\it m}-dissipative in $L^2(\mathcal
O)$, that is the range of $u\to u-B(t)u$ is all of $L^2(\mathcal
O)$. Indeed for each $f\in L^2(\mathcal O)$ the equation
$u-B(t)u=f$ has the solution
\[ u(\xi)=\int_0^\infty e^{-s}f(Z(s,\xi))\,ds,\quad \forall \xi\in \mathcal
O,\]
where $s\to Z(s,\xi)$ is the differential flow defined by equation
\begin{equation}\label{e4.8a}
\frac{dZ}{ds}=b(t,Z),\quad s\ge 0,Z(0)=\xi.\end{equation}
(By assumptions (4.6), (4.7), it follows that $t\to Z(t,\xi)$ is well defined on $[0,\infty)$.)\\
Hence, for each $t\in[0,T]$, $B(t)$ generates a $C_0$-group
$(e^{sB(t)})_{s\in\RR}$ on $L^2(\mathcal O)$ which is given by
\[\big(e^{B(t) s}f\big)(\xi)=f(Z(s,\xi)),\quad \forall f\in L^2(\mathcal O), \, s\in\RR.\]
It is also clear that $e^{B(t)s}V\subset V$ for all $s\ge 0$.
\begin{Remark}\em
Assumptions (4.5)--(4.7) can be weakened to discontinuous
multivalued mappings $\xi\to b(t,\xi)$ satisfying (4.6), (4.7) and
such that the solution $Z=Z(s,\xi;t )$ to the characteristic
system (4.8) is differentiable in $t$. The details are omitted.
\end{Remark}
%%%
The corresponding random differential equation \eqref{e1.2} has the form
\begin{equation}
\lk\{ \begin{array}{l}
\begin{aligned}\ds\frac{\partial y}{\partial t}- e^{\beta(t)B(t)} \mbox{div}_\xi(a(t,\nabla_\xi e^{-\beta(t)B(t)}y)\hspace{4cm}\\
+\int_0^{\beta(t)}e^{s\,B(t)}\dot B(t)e^{-s\,B(t)}y\,ds=0, \quad\mbox{in } (0,T)\times\mathcal O,\end{aligned}\\
y(0,\xi)=x(\xi)\quad \mbox{in } \mathcal O,\\
y(t,\xi)=0\quad \mbox{on } (0,T)\times \partial\mathcal O.
\end{array}\rk.
\end{equation}
Then by theorem 2.1 we have
\begin{Theorem}
There exists a solution $X$ to \eqref{e4.1} such that $\P$-a.s. $X\in L^{p_1}(0,T;W_0^{1,p_1}(\mathcal O)
)\cap L^{p_2'}(0,T;W^{-1,p_2'}(\mathcal O)) \cap L^{\infty}(0,T;L^{2}(\mathcal O))$.%L^{\infty}(0,T;H^{-1}(\mathcal O))$.
\end{Theorem}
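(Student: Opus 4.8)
The plan is to verify that equation \eqref{e4.1} is a genuine instance of the abstract framework \eqref{e1.1} and then invoke Theorem \ref{t2.1} (together with Theorem \ref{t2.3}) directly. First I would check hypotheses \textbf{(i)}--\textbf{(vi)} one at a time for the concrete data $H=L^2(\mathcal O)$, $V=W_0^{1,p_1}(\mathcal O)$, $V'=W^{-1,p_1'}(\mathcal O)$, $\psi(t,u)=\int_{\mathcal O}j(t,\nabla u)\,d\xi$ and $B(t)u=b(t,\xi)\cdot\nabla_\xi u$. The Gelfand triple condition \textbf{(i)} is classical for $p_1\ge 2$ (so that $V\subset H$ continuously and densely). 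For \textbf{(ii)} one shows $A(t)=\partial\psi(t,\cdot)$ with $\psi$ convex, lower semicontinuous in $u$ and measurable in $t$ — this follows from convexity of $j(t,\cdot)$ and a standard argument that the integral functional's subdifferential is computed pointwise ($\xi\mapsto a(t,\nabla u(\xi))$); the two-sided growth bound \eqref{e1.10} with the same $p_1,p_2$ is obtained by integrating the bound on $j$ over $\mathcal O$ and using the Poincaré inequality on $W_0^{1,p_1}$ to pass from $\|\nabla u\|_{L^{p_i}}$ to $\|u\|_V$. Condition \textbf{(iii)} is precisely the integrated form of \eqref{e4.3}. Condition \textbf{(iv)} follows from progressive measurability of $j$ in $(t,\omega)$, and \textbf{(vi)} is the hypothesis $f\in L^{p_1'}(0,T;V')$ (here $x$ plays no role beyond $x\in H$, which is $x\in L^2(\mathcal O)$).

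The substantive point — and where I expect the main work to lie — is hypothesis \textbf{(v)}: that $B(t)$ is densely defined, closed, skew-symmetric, generates a $C_0$-group on both $H$ and $V$, lies in $C^1([0,T];L(V,H))$, and (trivially, since $N=1$) commutes with itself. Most of this is already assembled in the excerpt: assumptions \eqref{e4.7a} ($\divv_\xi b=0$ and $b\cdot\nu=0$ on $\partial\mathcal O$) give skew-symmetry of $B(t)$ on $L^2(\mathcal O)$ via integration by parts with no boundary term, and they guarantee that the characteristic flow $Z(\cdot,\xi;t)$ of \eqref{e4.8a} is globally defined and measure-preserving, so that $(e^{sB(t)}f)(\xi)=f(Z(s,\xi;t))$ is a $C_0$-group of isometries on $L^2(\mathcal O)$; the $m$-dissipativity computation with $u(\xi)=\int_0^\infty e^{-s}f(Z(s,\xi))\,ds$ is given explicitly. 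The two things I would still need to nail down are: (a) that this group restricts to a $C_0$-group on $V=W_0^{1,p_1}(\mathcal O)$, which follows because $b$ and $\partial b/\partial r$ are continuous up to $\bar{\mathcal O}$ (assumption \eqref{e4.5a}), so $Z(s,\cdot;t)$ is a $C^1$-diffeomorphism of $\mathcal O$ preserving the boundary and hence composition with it is bounded on $W_0^{1,p_1}$, uniformly for $s$ in compacts; and (b) the regularity $t\mapsto B(t)\in L(V,H)$ being $C^1$ with derivative $\dot B(t)u=\partial_t b(t,\xi)\cdot\nabla_\xi u$, which is immediate from the assumed $C^1$-dependence of $b$ on $t$, and which is what makes $\Gamma(t)y=\int_0^{\beta(t)}e^{-sB(t)}\dot B(t)e^{sB(t)}y\,ds$ a well-defined operator with $|\Gamma(t)y|_H\le C\|y\|_V$.

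Once \textbf{(i)}--\textbf{(vi)} are in place, the conclusion is automatic: Theorem \ref{t2.1} gives a unique progressively measurable solution $y$ to the random equation \eqref{e1.2} — which in the present setting is exactly the displayed parabolic problem for $y$ — with $y\in L^{p_1}(0,T;W_0^{1,p_1}(\mathcal O))\cap C([0,T];L^2(\mathcal O))\cap W^{1,p_2'}([0,T];W^{-1,p_2'}(\mathcal O))$, and then Theorem \ref{t2.3} transfers this through the substitution $X(t)=e^{\beta(t)B(t)}y(t)$ to a solution $X$ of \eqref{e4.1}. Since $e^{\beta(t)B(t)}$ is, for each fixed $t$ and $\omega$, an isomorphism of $L^2(\mathcal O)$ and of $W_0^{1,p_1}(\mathcal O)$ (composition with the measure-preserving diffeomorphism $Z$), it preserves the spaces $L^{p_1}(0,T;W_0^{1,p_1})$, $L^\infty(0,T;L^2)$, and — using the duality and the $C([0,T];\cdot)$ regularity — also $L^{p_2'}(0,T;W^{-1,p_2'})$, so $X$ inherits exactly the claimed regularity $X\in L^{p_1}(0,T;W_0^{1,p_1}(\mathcal O))\cap L^{p_2'}(0,T;W^{-1,p_2'}(\mathcal O))\cap L^\infty(0,T;L^2(\mathcal O))$, $\P$-a.s. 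The only subtlety worth flagging is that the exponent bookkeeping ($p_2'$ versus $p_1'$, the fact that $W^{1,p_2'}([0,T];V')\subset L^{p_2'}(0,T;V')$) must be tracked carefully, but no new estimate is needed beyond what Theorems \ref{t2.1} and \ref{t2.3} already supply.
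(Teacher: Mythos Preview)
Your proposal is correct and follows essentially the same approach as the paper: verify that the concrete data $H=L^2(\mathcal O)$, $V=W_0^{1,p_1}(\mathcal O)$, $\psi(t,u)=\int_{\mathcal O}j(t,\nabla u)\,d\xi$, $B(t)u=b(t,\xi)\cdot\nabla_\xi u$ satisfy hypotheses \textbf{(i)}--\textbf{(vi)} (the nontrivial part being \textbf{(v)}, handled via the characteristic flow $Z$ and the divergence-free/tangency conditions on $b$), and then invoke Theorem~\ref{t2.1} (and implicitly Theorem~\ref{t2.3}) to obtain $X$ with the stated regularity. Your write-up is in fact more explicit than the paper's, which compresses the conclusion to a single line ``Then by Theorem~2.1 we have\ldots''; your remarks on the group acting on $V$ via composition with the $C^1$ diffeomorphism $Z(s,\cdot;t)$ and on the exponent bookkeeping ($V'=W^{-1,p_1'}(\mathcal O)\subset W^{-1,p_2'}(\mathcal O)$ since $p_1\le p_2$) fill in details the paper leaves to the reader.
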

We also note that in line with Theorem \ref{t2.2} if $X_n$,
$n\in\mathbb{N}$,  are solutions to equations
\begin{equation}\label{eq-44.8}
\lk\{\begin{array}{l}
\begin{aligned}dX ^n_t -\divv_\xi a ^n(t,\nabla_\xi X ^n_t)\, dt-
\tfrac 12 b_n(t,\xi)\cdot \nabla \lk( b _n(t,\xi) X ^n_t\rk) \, dt  = \hspace{0cm}\\
=b _n(t,\xi )\cdot \nabla_\xi X ^n_t \, d\beta ^n(t),\phantom{\Big|}  \quad
(t,\xi) \mbox{ in $(0,T)\times \CO$},\phantom{\Big|}\end{aligned}
\\
X ^n_0= x \mbox{ in } \CO,\phantom{\Big|}
\\
X ^n_t = \mbox{ on } (0,T)\times \partial \CO,\phantom{\Big|}
\end{array}\rk.
\end{equation}
where $b_n\to b$ uniformly on $[0,T]\times \CO$ and $a_n(t,y) \to a(t,y)$,
$a_n ^{-1}(t,y) \to a ^{-1}(t,y)$, $\beta_n(t)\to \beta(t)$ for all $y\in\RR ^d$, $t\in [0,T)$,
then $X ^n\to X$ weakly in $L^{p_1}(0,T;W ^{1,p_1}_0 (\CO))$.
Standard examples refer to structural stability, PDEs as well as
to homogenization type results for equation \eqref{e4.1}. In latter case $a_n(t,z)=a(t,n z)$
where $a(t,\cdot)$ is periodic (see e.g., \cite{ABM}.)

Equation \eqref{e4.1} is relevant in the mathematical
description of nonlinear diffusion processes  perturbed by a Brownian
distribution with coefficient transport term
$b(t,\xi)\cdot\nabla_\xi X$.

The assumption $p_1\ge 2$ was taken here for technical
reason required by the functional framework we work in and this excludes several relevant examples.
For instance, the limit case $p_1=1$ which corresponds to the
nonlinear diffusion function $a(t,y)= \rho \frac{y}{|y|_d}$,
$\rho>0$, which is relevant in material science and image
restoring techniques (see e.g. \cite{barbu-barbu,barbu-2010}) is
beyond our approach and requires a specific treatment
(see also \cite{barbu-dp-R}
for the treatment of a similar problem with additive and continuous noise.)

%%%%%%%%%%%%%%%%%%%%%%%%%13*
In 2-D the appropriate functional setting to treat such a problem
is $V=BV(\mathcal O)$ the space of functions with bounded
variation on $\mathcal O$ with the norm $\varphi(y)$ and
$H=L^2(\mathcal O)$. Here
$\varphi(y)=\|Dy\|+\int_{\partial \mathcal O}|\gamma_0(y)| d\mathcal H$, $y\in V$,
$\|Dy\|$ is  the variation of $y\in V$, $\gamma_0(y)$ is the trace on $\partial\mathcal O$
and $d\mathcal H$ is the Hausdorff measure on $\partial\mathcal O$.
We recall that the norm $\varphi$ is just the lower semicontinuous closure of the norm
of Sobolev space $W_0^{1,1}\mathcal O)$  (see e.g., \cite[pag. 438]{A}.)
Then the approach developed in section 3
can be adapted to present situation though $V$ is not reflexive.
We expect to treat this limit case in a forthcoming work. (On
these lines see also \cite{B2}.)

%%%%%%%%%%%%%%%%%%%13**
\subsection{Linear diffusion equations with nonlinear Neumann boundary conditions}
Consider the equation
\begin{equation}\label{e4.8}\lk\{
\begin{array}{l}\begin{aligned}
dX_t-\Delta X_t\,dt-\frac12 b(t,\xi)\cdot\nabla_\xi\big(b(t,\xi)\cdot\nabla_\xi X_t\big)\,dt
=\\b(t,\xi)\cdot\nabla_\xi X_t\;d\beta(t) \quad \mbox{in }[0,T]\times\mathcal O\end{aligned}\\
\ds \frac{\partial\ }{\partial\nu}X_t+\zeta(t,X_t)\ni 0\quad \mbox{on }[0,T]\times\partial\mathcal O\\
X_0=x\qquad \mbox{in } \mathcal O
\end{array}\rk.
\end{equation}
where $\zeta(t,r)=\partial j_0(t,r)$, $\forall t\in(0,T)$, $r\in\R$ and $j_0(t,\cdot)$
is a lower semicontinuous convex function on $\R$ such that
\[
\gamma_1+\alpha_1|y|^2\le j_0(t,y)\le\gamma_2+\alpha_2|y|^2,\qquad \forall y\in\R, t\in(0,T)
\]
and $\alpha_i>0$, $\gamma_i\in\R$, $i=1,2$.\\
Assume also that \eqref{e4.3} holds and that $b=b(t,\cdot)$ satisfies  conditions \eqref{e4.5a}--\eqref{e4.7a}.\\
Then we may apply Theorems \ref{t2.1}, \ref{t2.2}, and \ref{t2.3},
where $V=H^1(\mathcal O)$, $H=L^2(\mathcal O)$ and
\[
\psi(t,y)=\tfrac12 \int_{\mathcal O}|\nabla y|^2\,d\xi+ \int_{\partial\mathcal O}j(t,y)\,d\xi,\qquad \forall y\in V.
\]
It follows so the existence of a solution $X\in L^2(0,T;V)\cap
W^{1,2}([0,T];V')$ to \eqref{e4.8} and also the structural
stability of \eqref{e4.8} with respect to $b$. Problems of this
type arise in thermostat central. In this case
\[
\zeta(t,y)= \begin{cases}\big(\alpha_1(t) H(y)+\alpha_2(t)H(-y)\big)y&\mbox{if }y\not=0\\
[-\alpha_2(t),\alpha_1(t)]&\mbox{if } y=0
\end{cases}
\]
where $\alpha_i>0$, $\forall t\in[0,T[$ and $H$ is the Heaviside function.

%%%%%%%%%%%%%%%%%%%%%%%%%%%

\subsection{Nonlinear stochastic porous media equation}
Consider the equation
\begin{equation}\label{e4.6}\lk\{
\begin{array}{l}
\begin{aligned}dX_t-\Delta_\xi \phi(t,X_t)\,dt- \tfrac12\; b(t,\xi)\cdot \nabla_\xi (-\Delta)^{-1}(b(t,\xi)\cdot\nabla_\xi ((-\Delta)^{-1}X_t)\,dt\hspace{-.6cm}\\
=b(t,\xi)\cdot\nabla_\xi (-\Delta)^{-1}X_t\,d\beta(t),\quad \mbox{ in } (0,T)\times\mathcal O\end{aligned}\\
X_0=x\quad \mbox{ in } \mathcal O\\
X_t=0 \quad \mbox{ on } (0,\infty)\times\partial\mathcal O
\end{array}\rk.
\end{equation}
Here $\mathcal O\subset \R^d$, $d=1,2,3$ is a bounded open domain
and $(-\Delta)^{-1}$ is the inverse of the operator $A_0=-\Delta$,
$D(A_0)=H_0^1(\mathcal O)\cap H^2(\mathcal O)$.
The function $\phi\colon (0,T)\times\R^d\to\R$ is assumed to satisfy the following conditions\\[6pt]
\textbf{(k)} \emph{$\phi=\phi(t,r)$ is monotonically decreasing in $r$,  measurable in $t$ and its
potential
\[
j(t,r)=\int_0^r \phi(t,\tau)\,d\tau, \qquad t\in(0,T)
\]
satisfies the growth conditions
\begin{equation}
\gamma_1+\alpha_1\,|r|^{p_1}\le j(t,r)\le \gamma_2+\alpha_2\,|r|^{p_2}, \qquad \forall r\in\R, \omega\in\Omega, t\in [0,T]
\end{equation}
\begin{equation}
j(t,-r)\le c_1\;j(t,r) +c_2,\qquad \forall r\in \R , t\in (0,T)
\end{equation}
where $\frac65\le p_1\le p_2<\infty$ if $d=3$, $1<p_1\le p_2<\infty$ if $d=1,2$.} \\[6pt]
Then equation \eqref{e4.6} can be written as \eqref{e1.1}, where
$H=H^{-1}(\mathcal O)$, $V=L^{p_1}(\mathcal O)$ and
$A(t)=\partial\psi(t,\cdot)$ where $\psi(t,\cdot)\colon
H\to\bar\R$ is defined by
\[
\psi(t,y)=
\begin{cases}
\ds\int_{\mathcal O} j(t,y)\,d\xi&\mbox{if } y\in H^{-1}(\mathcal O), j(t,y)\in L^1(\mathcal O)\\
+\infty &\mbox{otherwise},
\end{cases}
\]
and $B(t)$, $t\in\R^+$ is defined by
\begin{equation}\label{e4.15a}
B(t)u=b(t,\xi)\cdot \nabla((-\Delta)^{-1}u),\quad u\in V.
\end{equation}
The space $V'$ is in this case the dual of $V=L^{p_1}(\mathcal O)$ with  $H^{-1}(\mathcal O)$
as \emph{pivot} space. By the Sobolev embedding theorem it is easily seen that since $p_1\ge\tfrac65$
we have $V\subset H^{-1}(\mathcal O)$. The scalar product on $H$ is defined by
\[
\langle u,v\rangle_{H^{-1}(\mathcal O)}=u(z), \quad z=(-\Delta)^{-1}v.
\]
It is well known that $A(t) X=-\Delta_\xi\phi(t,X)$ is indeed the subdifferential of $\psi(t,\cdot)$
in $H^{-1}(\mathcal O)$ (see e.g., \cite[pag. 68]{barbu-2010}).\\
As regards $b\colon [0,T]\times \bar{\mathcal O}\to\R^d$
we assume that conditions \eqref{e4.5a}--\eqref{e4.7a} hold.\\
We note that for each $t\in [0,T]$, $B(t)\in L(V,H^{-1}(\mathcal
O))$ is densely defined and skew-symmetric on $H^{-1}(\mathcal
O)=H$. Indeed we have
\[
\begin{split}\langle B(t)u,u\rangle=\int_{\mathcal O}  \mbox{div}(b(t,\xi)(-\Delta)^{-1}u)\cdot(-\Delta)^{-1}u \,d\xi=\\
=\tfrac12\int_{\mathcal O}
b(t,\xi)\cdot\nabla|(-\Delta)^{-1}u(\xi)|^2 \,d\xi=0,\end{split}\]
because $\mbox{div}_\xi b=0$ and $b(t,\xi)\cdot\nu(\xi)=0$ on $\partial\mathcal O$.\\
Moreover, for each $t\in[0,T]$, $B(t)$ is {\it m}-dissipative on $H^{-1}(\mathcal O)$. Indeed for each
$f\in H^{-1}(\mathcal O)$, the equation $u-B(t)u=f$ can be equivalently written as $v=(-\Delta)^{-1}u$, where
\[
\begin{array}{rcl}
-\Delta v-b(t,\cdot)\cdot\nabla v&=&f\quad \mbox{in }\mathcal O,\\
v&=&0\quad \mbox{on }\partial\mathcal O.
\end{array}
\]
By Lax-Milgram lemma the latter has a unique solution $v\in H_0^1(\mathcal O)$ and therefore
$u\in H^{-1}(\mathcal O)$ as claimed. Moreover, if $f\in L^{p_1}(\mathcal O)$
and $\partial\mathcal O$ is of class $C^2$
then by the Agmon-Douglis-Nirenberg theorem $v\in W^{2,p_1}(\mathcal O)\cap W_0^{1,p_1}(\mathcal O)$ and so $u\in V$.\\
Hence, $B(t)$ generates a $C_0$-group $(e^{sB(t)})_{s\in\RR}$ on
$H=H^{-1}(\mathcal O)$ which leaves $V=L^{p_1}(\mathcal O)$
invariant.
\\
Then we may apply Theorem \ref{t2.1} as well as the approximation
Theorem \ref{t2.2} to the present situation. We obtain
\begin{Theorem}
There is a unique solution $X$ to \eqref{e4.6} such that $\P$-a.s.
$X\in L^{p_1}(0,T;L^{p_1}(\mathcal O))\cap L^\infty(0,T;H^{-1}(\mathcal O))$.
Moreover, the solution $X$ is a limit of approximating solutions when the Brownian motion $\beta$ is approximated by a sequence of smooth processes.
\end{Theorem}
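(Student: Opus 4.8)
The plan is to recast \eqref{e4.6} as an instance of the abstract equation \eqref{e1.1} with pivot space $H=H^{-1}(\mathcal O)$, reflexive Banach space $V=L^{p_1}(\mathcal O)$, $A(t)=\partial\psi(t,\cdot)$ as above, $B(t)$ given by \eqref{e4.15a} and $f\equiv 0$, and then to read off existence, uniqueness, regularity and stability directly from Theorems \ref{t2.1}, \ref{t2.2} and \ref{t2.3}. The whole work thus consists in verifying that the structural hypotheses \textbf{(i)}$\sim$\textbf{(vi)} hold in this concrete situation; once that is done, nothing further needs to be proved.

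First I would check \textbf{(i)}: $V=L^{p_1}(\mathcal O)$ is separable and reflexive since $1<p_1<\infty$, and the Sobolev embedding $H_0^1(\mathcal O)\hookrightarrow L^{p_1'}(\mathcal O)$---which holds with no restriction for $d=1,2$ and requires exactly $p_1\ge\tfrac65$ for $d=3$---dualizes to the continuous dense embedding $L^{p_1}(\mathcal O)\hookrightarrow H^{-1}(\mathcal O)$, i.e.\ $V\subset H\subset V'$ algebraically and topologically with $H^{-1}(\mathcal O)$ as pivot space. For \textbf{(ii)} I would invoke the classical fact (see e.g.\ \cite[pag.~68]{barbu-2010}) that $X\mapsto-\Delta_\xi\phi(t,X)$ is the subdifferential in $H^{-1}(\mathcal O)$ of $\psi(t,\cdot)$, and integrate the pointwise bounds of hypothesis \textbf{(k)} over $\mathcal O$ to obtain $|\mathcal O|\gamma_1+\alpha_1\|y\|_V^{p_1}\le\psi(t,y)$ for $y\in V$ together with the matching $\|y\|_V^{p_2}$ upper bound; the one point needing care here is that, when $p_1<p_2$, $\psi(t,\cdot)$ equals $+\infty$ outside $L^{p_2}(\mathcal O)$, so one reads \eqref{e1.10} on the effective domain $\{\psi(t,\cdot)<\infty\}$, which is where the variational problem \eqref{e3.5} lives. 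Condition \textbf{(iii)} is the integrated form $\psi(t,-y)\le c_1\psi(t,y)+c_2|\mathcal O|$ of the second inequality in \textbf{(k)}, \textbf{(iv)} is immediate because $\psi$ is deterministic and measurable in $t$, and \textbf{(vi)} holds trivially with $f\equiv 0\in L^{p_1'}(0,T;V')$.

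The substantive step is \textbf{(v)} for $B(t)u=b(t,\xi)\cdot\nabla\big((-\Delta)^{-1}u\big)$, and most of it has already been carried out in the discussion preceding the statement: under \eqref{e4.5a}--\eqref{e4.7a}, $B(t)\in L(V,H)$ is densely defined and skew-symmetric on $H^{-1}(\mathcal O)$ because $\langle B(t)u,u\rangle=\tfrac12\int_{\mathcal O}b(t,\xi)\cdot\nabla|(-\Delta)^{-1}u|^2\,d\xi=0$ by $\divv_\xi b=0$ and $b\cdot\nu=0$; it is $m$-dissipative since, writing $v=(-\Delta)^{-1}u$, the resolvent equation $u-B(t)u=f$ becomes $-\Delta v-b\cdot\nabla v=f$ in $\mathcal O$, $v=0$ on $\partial\mathcal O$, which is uniquely solvable in $H_0^1(\mathcal O)$ by Lax--Milgram and, for $f\in L^{p_1}(\mathcal O)$ and $\partial\mathcal O\in C^2$, in $W^{2,p_1}(\mathcal O)\cap W_0^{1,p_1}(\mathcal O)$ by Agmon--Douglis--Nirenberg, so that $u\in V$; hence $B(t)$ generates a $C_0$-group $(e^{sB(t)})_{s\in\RR}$ on $H^{-1}(\mathcal O)$ which leaves $L^{p_1}(\mathcal O)$ invariant and restricts there to a $C_0$-group. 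The $C^1$ regularity $t\mapsto B(t)\in L(V,H)$ follows from \eqref{e4.5a}, and the commutativity requirement in \textbf{(v)} is vacuous since $N=1$.

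Granting \textbf{(i)}$\sim$\textbf{(vi)}, Theorem \ref{t2.1} produces, for each $\omega$, a unique progressively measurable solution $y\in L^{p_1}(0,T;V)\cap C([0,T];H)\cap W^{1,p_2'}([0,T];V')$ of the random equation \eqref{e1.2}; Theorem \ref{t2.3} then gives $X(t)=e^{\beta(t)B(t)}y(t)$ as the unique solution of \eqref{e1.1}, i.e.\ of \eqref{e4.6}, with $X\in L^{p_1}(0,T;L^{p_1}(\mathcal O))$ and $X\colon[0,T]\to H^{-1}(\mathcal O)$ continuous, hence in $L^\infty(0,T;H^{-1}(\mathcal O))$---precisely the asserted regularity. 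Finally, applying Theorem \ref{t2.2} together with Theorem \ref{t2.3} with $A_n=A$, $f_n=0$, $B_j^n=B_j$ kept fixed and only $\beta$ replaced by smooth processes $\beta_n\to\beta$ uniformly on $[0,T]$ yields $y_n\to y$ and $X_n\to X$ weakly in $L^{p_1}(0,T;L^{p_1}(\mathcal O))$ and weakly-star in $L^\infty(0,T;H^{-1}(\mathcal O))$, $\P$-a.s., which is the ``moreover'' assertion. I expect the only genuine obstacle to be the step showing that $B(t)$ generates a $C_0$-group \emph{simultaneously} on $H^{-1}(\mathcal O)$ and on $L^{p_1}(\mathcal O)$ with $e^{sB(t)}V\subset V$---i.e.\ the elliptic-regularity argument above---and, to a lesser extent, reconciling the two growth exponents $p_1\le p_2$ with the single space $V=L^{p_1}(\mathcal O)$ in \eqref{e1.10}.
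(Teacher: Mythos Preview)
Your proposal is correct and follows exactly the paper's approach: the theorem is stated immediately after the paper has verified, in the discussion preceding it, that hypotheses \textbf{(i)}$\sim$\textbf{(vi)} hold for $H=H^{-1}(\mathcal O)$, $V=L^{p_1}(\mathcal O)$, $A(t)=\partial\psi(t,\cdot)$ and $B(t)$ as in \eqref{e4.15a}, and then simply invokes Theorems \ref{t2.1}--\ref{t2.3}. Your checklist of \textbf{(i)}$\sim$\textbf{(vi)} reproduces that verification (Sobolev embedding for $V\subset H$, subdifferential identification via \cite[pag.~68]{barbu-2010}, skew-symmetry and $m$-dissipativity of $B(t)$ via Lax--Milgram and Agmon--Douglis--Nirenberg), and the two caveats you flag---the $C_0$-group acting on both $H^{-1}(\mathcal O)$ and $L^{p_1}(\mathcal O)$, and the interpretation of \eqref{e1.10} when $p_1<p_2$---are precisely the points the paper addresses only briefly or leaves implicit.
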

Moreover, if $\phi_n\to \phi$ and $\phi_n^*\to \phi^*$, $b_n\to b$ we find by Theorem 2.2 that the
corresponding solutions $X_n$ to \eqref{e4.6} are convergent to solution $X$ to \eqref{e4.8}.
The details are omitted.

Existence for stochastic porous media equation of the form
\[
\lk\{
\begin{array}{l}
dX_t-\Delta_\xi \phi(X_t)\,dt=\sigma(X_t)\,dW(t) \quad \mbox{in } (0,T)\times\mathcal O\\
X_0=x \quad \mbox{ in } \mathcal O\\
X_t=0 \quad \mbox{ on } (0,T)\times\partial\mathcal O,
\end{array}
\rk.
\]
when $W_t$ is a Wiener process
of the form
\[
W(t,\xi)=\sum_{k=1}^\infty \mu_k\,e_k(\xi) \,\beta_k(t)
\]
with $\sum_{k=1}^\infty \mu_k^2\,\lambda_k^2<\infty$, $\Delta e_k=-\lambda_k\,e_k$ in $\mathcal O$, $e_k\in H_0^1(\mathcal O)$,
and $\sigma=\sigma(x)$ is a linear continuous operator, were studied
in \cite{barbu-R,barbu-dp-R}. Note that in this case the noise term can also be written in our form
with commuting and, contrary to our paper, bounded operators $B_j$. Here the multiplicative term
$\sigma(X_t)=b\cdot\nabla (-\Delta)^{-1}X_t$ is however
discontinuous on the space $H^{-1}(\mathcal O)$ and so Theorem 4.2
is from this point of view different and in this sense more general.

Equation \eqref{e4.6} models diffusion processes and the motion of fluid flows in porous media.
The case considered here $(p_1> 1)$ is that of slow diffusion.

\begin{Remark}\label{rem-Sussman-pm}\em
Theorem \ref{thm-sussmann} and Remark \ref{rem-sussmann} are also valid in the current setup.
\end{Remark}

\appendix
\section{Convex functions}
\label{Bapp}\label{appendix-convex-function} We summarize in this
paragraph some facts about convex functions, which we have used in
our paper.

Given a convex and lower-semi continuous  function $\phi:Y\to\bar
\RR =(-\infty,\infty]$ we denote by $\partial \phi:Y\to Y'$ (the
dual space) the {\sl subdifferential} of $\phi$, i.e.\ \DEQSZ
\label{eq-1.6}
\partial \phi (y) := \lk\{z\in Y': \phi(y)-\phi(u)\le \lb y-u,z\rb ,\, \, \forall u\in Y\rk\}.
\EEQSZ (Here $\lb \cdot,\cdot\rb $ is the duality paring between
$Y$ and $Y'$).
 The function $\phi ^ \ast:Y'\to Y$ defined by
\DEQSZ\label{eq-1.7} \phi ^ \ast (z)&=& \sup\lk\{\lb y,z\rb -
\phi(y): y\in Y\rk\}, \EEQSZ is called the conjugate of $\phi$ and
as
 $\phi$. Similarly to
it is convex lower semi continuous function on $Y'$. Also we
notice the following key conjugacy formulae (see e.g.\ \cite[p.
89]{barbu-1987}). If   $y\in Y$, and $z\in Y'$
\DEQSZ
\label{eq-1.9} \phi(y)+\phi ^ \ast (z) &\ge & \lb y,z\rb \quad
\EEQSZ
 \DEQSZ\label{eq-1.8} \phi(y)+\phi ^ \ast (z) &=& \lb y,z\rb
\quad \mbox{ iff } z\in \partial \phi(y), \EEQSZ

A vector $x^\ast$ is said to be a subgradient of a convex function
$\phi$ at a point $x$ if \DEQSZ\label{subg} \phi(z) \ge \phi(x) +
\lb x^\ast,z-x\rb. \EEQSZ

Moreover, straightforward calculations give
\DEQSZ\label{shift_con} \phi^\ast(x^\ast) = \phi_y^\ast(x^\ast) -
(y,x^\ast) , \EEQSZ whenever $\phi(x)=\phi_y(x+y)$.

\medskip
\begin{acknowledgements}
The work of V. Barbu was supported by the grant of the Romanian National
Authority for Scientific Research  1ERC/02.07.2012
and by CIRM (Fondazione Bruno Kessler). The
work of E. Hausenblas was supported  by the Austrian Science Fund
(FWF): P20705. Moreover, the authors would like to thank the
Newton Institute, where part of this work was done during the
special semester on ``Stochastic Partial Differential Equations".
\end{acknowledgements}

%%%%%%%%%%%%%%%%%%%%

\end {document}